\title{Asymptotic normality of a Sobol index estimator in Gaussian process regression framework}
\author{Loic Le Gratiet $^\dag$  $^\ddag$\\ \\ $^\dag$ Universit\'e Paris Diderot 75205 Paris Cedex 13 \\ \\ $^\ddag$ CEA, DAM, DIF, F-91297 Arpajon, France  }
\newtheorem{theorem}{Theorem}
\newtheorem{prop}{Proposition}
\newcommand{\sigr}{\sigma_\varepsilon}
\newcommand{\EX}[1]{\mathbb{E}_X\left[#1\right]}
\newcommand{\VX}[1]{\mathrm{var}_X\left(#1\right)}
\newcommand{\covX}[2]{\mathrm{cov}_X\left(#1,#2\right)}
\newcommand{\EZ}[1]{\mathbb{E}_Z\left[#1\right]}
\newcommand{\N}[2]{\mathcal{N}\left(#1,#2\right)}
\newcommand{\probZ}[1]{\mathbb{P}_Z\left( #1 \right)}
\newcommand{\eqL}[0]{\stackrel{\mathcal{L}}{=}}
\newcommand{\tx}[0]{{\tilde{x}}}
\newcommand{\tX}[0]{{\tilde{X}}}
\newcommand{\zn}[0]{\mathbf{z}^n}
\newcommand{\K}[0]{\mathbf{K}}
\newcommand{\I}[0]{\mathbf{I}}
\newcommand{\D}[0]{\mathbf{D}}
\newcommand{\kk}[0]{\mathbf{k}}
\newcommand{\uu}[0]{\mathbf{u}}
\newcommand{\R}[0]{\mathbb{R}}
\newcommand{\mmu}[0]{\boldsymbol{\mu}}
\newcommand{\SSigma}[0]{\boldsymbol{\Sigma}}
\begin{document}
\maketitle

		\section{Abstract}

		Stochastic simulators  such as Monte-Carlo estimators are widely used in science and engineering to study physical systems through their probabilistic representation.    Global sensitivity analysis aims to identify the input parameters  which have the most important impact on the output. A popular tool  to perform global sensitivity analysis is the variance-based method which comes from the Hoeffding-Sobol decomposition. Nevertheless, this method requires an important number of simulations and is often unfeasible under reasonable time constraint. Therefore, an approximation of the input/output relation of the code is built with a Gaussian process regression model. This paper provides conditions which ensure the asymptotic normality of a Sobol's index estimator evaluated through this surrogate model. This result allows for building asymptotic confidence intervals for the considered Sobol index estimator. 
		The presented method is successfully applied on an academic example on the heat equation.

		\paragraph{Keywords:} Sensitivity analysis, Gaussian process regression, asymptotic normality, stochastic simulators, Sobol index.

		\section{Introduction}

		Complex computer codes usually  have a large number of input parameters.  
		 The determination of the important input parameters can be carried out by a global sensitivity analysis.
		We focus on the variance-based Sobol indices \cite{sobol1993}, \cite{Sal00},  \cite{sobol2} and \cite{cacuci} coming from the Hoeffding-Sobol decomposition \cite{hoeffding1948} which is valid when the input parameters are independent random variables.   For an extension of the Hoeffding-Sobol decomposition in a non-independent case, the reader is referred to  \cite{kucherenko}, \cite{daveiga}, \cite{mara}, \cite{li} and  \cite{chastaing}.

		Monte-Carlo methods are commonly  used   to estimate the Sobol indices (see \cite{sobol1993}, \cite{sobol2007estimating} and \cite{Jan12}). One of their  main advantages is that they  allow for quantifying the uncertainty related  to the estimation errors.  In particular, for non-asymptotic cases, this can be easily  carried  out with a bootstrap procedure  as presented in \cite{archer1997sensitivity} and  \cite{janon2011uncertainties}. Furthermore, in asymptotic cases, useful properties  can be shown as the asymptotic normality \cite{Jan12}. The reader is referred to   \cite{vaart98} for an extensive presentation of asymptotic statistics.
		Nevertheless, Monte-Carlo methods require a large number of simulations and are often  unachievable under  reasonable time constraints. Therefore,  in order to avoid prohibitive computational costs,  we surrogate the simulator with a  meta-model and we perform the estimations on it. 

		 In this paper, we consider a   special  surrogate model corresponding to  a Gaussian process regression.
		More precisely  we consider an idealized regression problem   for which we can deduce a posterior predictive mean and variance tractable for our purpose. In particular, we can  derive the  rate of convergence of the meta-model approximation error  with respect to the computational  budget. 

		Therefore, the Sobol index estimations - which are performed with a Monte-Carlo procedure by replacing the true code with the  posterior predictive mean - have two sources of uncertainty: the one due to the Monte-Carlo scheme and the one due to the meta-model approximation. The error due to the Monte-Carlo procedure tends to zero when the number of particles (calls of the meta-model)  tends to   infinity and the error due to the meta-model tends to zero when the computational  budget (calls of the complex simulator used to build the meta-model) tends to   infinity. A question of interest is whether the asymptotic normality presented in \cite{janon2011uncertainties}  is maintained.
		The principal difficulty of the study is that the estimator lies  in a product probability space which takes into account both  the uncertainty of the Gaussian process and the one of the Monte-Carlo sample.

		We emphasize that \cite{janon2011uncertainties} presents such a result for noise-free Gaussian process regression using a squared exponential covariance kernel (see \cite{R06}). They give conditions on the number of simulations and the number of Monte-Carlo particles which ensure the asymptotic normality for  the Sobol index estimators. A part of our developments is inspired by their work nevertheless they are different  with some important respects. Indeed, the particular case of noise-free Gaussian process regression with squared exponential covariance kernel allows for not considering the probability space in which lies the Gaussian process. This  significantly simplifies the mathematical  developments. Unfortunately this simplification does not hold in our  general framework.

		In this paper, we are interested in stochastic simulators which use Monte-Carlo or Monte-Carlo Markov Chain methods to solve a system of differential equations through its probabilistic interpretation.   Such simulators provide noisy observations  with a noise level inversely proportional to  the number of Monte-Carlo particles used by the simulator. 
		Therefore, with a fixed  computational budget,  we have to make a trade-off between the number of  simulations and the output accuracy. Actually, we   consider the asymptotic case where the number of observations is large. 

		The main result of this paper is a theorem giving sufficient conditions to ensure the asymptotic normality of the Sobol index estimators based on  the Monte-Carlo procedure presented in  \cite{sobol1993} and  using the presented Gaussian process regression model. We note that the presented theorem holds for a large class of covariance kernels. 
		The asymptotic normality is of interest since it allows for giving asymptotic confidence intervals on the Sobol index estimators.
		This result is illustrated with an academic example dealing with  the heat equation  problem.

		\section{Gaussian process regression for stochastic simulators}\label{boomboomboom}
		
		We present in Subsection \ref{asymptotic_presentation}  the practical problem that we want to deal with.  In order to handle the asymptotic  framework of a large number of observations, we replace the true problem by an idealized version of it in Subsection \ref{asymptotic_idealization}. This idealization allows us to  study the asymptotic normality of the Sobol's index estimator in Section \ref{AsymptoticNormality}.

			\subsection{Gaussian process regression  with a  large number of    observations}\label{asymptotic_presentation}
			
			Let us suppose that we want to surrogate a function $f(x) $, $x \in Q \subset \R^d$,  from noisy observations of it at points $(x_i)_{i=1,\dots,n}$  sampled from the probability measure $\mu$ - $\mu$ is called the design measure and $Q$ is an nonempty open set. 
			Furthermore, we consider that we have $r$ replications at each point.
			We hence  have $nr$ experiments of the form $z_{i,j}= f(x_i) + \varepsilon_{i,j}$, $i=1,\dots,n$, $j=1,\dots,r$ and we consider that $(\varepsilon_{i,j})_{\substack{i=1,\dots,n \\ j=1,\dots,r}}$ are  independently     sampled from a   Gaussian distribution with mean zero and variance $\sigr^2$.
			A stochastic simulator provides outputs  of the following  form 
			\begin{displaymath}
			z_i = \frac{1}{r}\sum_{j=1}^{r}z_{i,j} = f(x_i) + \varepsilon_i, \quad \forall i=1,\dots,n
			\end{displaymath}
			where $(\varepsilon_i)_{i=1,\dots,n}$ are  the observation noises  sampled from a zero-mean Gaussian distribution with variance  $\sigr^2/r$.
			Therefore, if we consider a fixed number of experiments $T=nr$,  we have an observation noise variance equal to $n\sigr^2/T$.
			
			Note that an observation noise variance   proportional to $n$ is natural in the framework of  stochastic simulators. Indeed, for a fixed total number of experiments  $T = nr$,  we can either decide to perform them in few points (i.e. $n$ small) but with lot of replications (i.e. $r$ large) or decide to perform them in lot of points (i.e. $n$ large) but with few replications (i.e. $r$ small).

			  In a Gaussian process regression framework,  we model $f(x)$ as   a Gaussian process  with a  known mean (that we take equal to zero without loss of generality) and a covariance kernel $k(x,\tx)$. Therefore, in the remainder of this paper, the function $f(x)$ is random. The predictive Mean Squared Error (MSE) of the Best Linear Unbiased Predictor (BLUP) given by
			\begin{equation}\label{BLUP_N}
			\hat{z}_{T,n}(x) = \kk'(x)  \left(\K+\frac{n\sigr^2}{T}\I\right)^{-1}\zn
			\end{equation}
			 is 
			\begin{equation}\label{MSE_BLUP_N}
			\sigma^2_{T,n}(x) = k(x,x) - \kk'(x) \left(\K+\frac{n\sigr^2}{T}\I\right)^{-1}\kk(x)
			\end{equation}
			where   $\zn = (z_i)_{i=1,\dots,n}$ denotes the vector of the observed values,   $k(x) = [k(x,x_i)]_{1\leq i \leq n}$ is the $n$-vector containing the covariances between $f(x)$ and $f(x_i), \quad 1\leq i \leq n$,  $\K = [k(x_i,x_j)]_{1\leq i,j \leq n}$ is the $n \times n$-matrix containing the covariances between  $f(x_i)$ and $f(x_j), \quad 1\leq i ,j \leq n$ and $\I$ is the $n \times n$ identity matrix. 
			
			In this paper, we consider the case  $n  \gg 1$. It corresponds to a massive experimental design set but with observations with a  large noise variance. This case is realistic for stochastic simulators where the computational cost resulting from  one Monte-Carlo particle is very low and thus  can be run in lot of points $(x_i)_{i=1,\dots,n}$.


			\subsection{Idealized Gaussian process regression} \label{asymptotic_idealization}

			We assume  from now on that  the positive kernel $k(x,\tx)$ is continuous and that $\sup_{x \in Q} k(x,x) < \infty$ where $Q$ is a nonempty open subset of $\R^d$. We introduce the Mercer's decomposition of $k(x,\tx)$   \cite{konig1986eigenvalue}, \cite{ferreira2009eigenvalues}:
			 \begin{equation}\label{Mercerk}
			k(x,\tx) = \sum_{p\geq0}\lambda_p \phi_p(x) \phi_p(\tx)
			\end{equation}
			where  $(\phi_p(x) )_p$ is an orthonormal  basis of $L^2_\mu(\mathbb{R}^d)$ consisting of eigenfunctions of the integral operator $(T_{\mu,k} g)(x)=\int_{\mathbb{R}^d} k(x,u)g(u)d\mu(u)$ and  $\lambda_p$ is the nonnegative sequence of corresponding eigenvalues sorted  in decreasing order. 

			Let us consider the following predictor:
			\begin{equation}\label{BLUP}
			 \hat{z}_T(x) = \sum_{p\geq0}\frac{\lambda_p}{\lambda_p + \sigr^2/T }z_p\phi_p(x)
			\end{equation}
			where $z_p = f_p + \varepsilon_p^*$, $f_p = \int f(x) \phi_p(x)\,d\mu(x)$,  $\varepsilon_p^* \sim \N{0}{\sigr^2/T }$, $\varepsilon_p^*$ independent of  $\varepsilon_q^*$ for $ p \neq q$ and     $(\varepsilon_p^*)_{p\geq 0}$ independent of $(f_p)_{p\geq 0}$. Note that we have $f_p \sim \N{0}{\lambda_p}$,  $f_p$ independent of $f_q$ for $p \neq q$  and $f(x) = \sum_{p \geq 0} f_p \phi_p(x)$.  

			Let us introduce  the probability space $(\Omega_Z, \mathcal{F}_Z, \mathbb{P}_Z) = (\Omega_f \times \Omega_\varepsilon, \sigma(\mathcal{F}_f \times \mathcal{F}_\varepsilon), \mathbb{P}_f \times \mathbb{P}_\varepsilon)$ where $(\Omega_f, \mathcal{F}_f, \mathbb{P}_f)$ corresponds to the probability space where  $f(x)$ and the sequence $(f_p)_{p \geq 0}$ are defined and $(\Omega_\varepsilon, \mathcal{F}_\varepsilon, \mathbb{P}_\varepsilon)$ is the probability space where the observation noises $(\varepsilon_{i})_{i \in \mathbb{N}}$ and  the sequence $(\varepsilon_p^*)_{p \geq 0}$ are  defined.
			Further, let us consider  the sequence of  independent random variables  $(X_i)_{i\in \mathbb{N}}$  with probability measure $\mu$ on $Q \subset \R^d$ and defined on  the probability space $(\Omega_D, \mathcal{F}_D, \mathbb{P}_D)$.   The sequence $(X_i)_{i = 1,\dots,n}$ represents the experimental design set considered as a random variable. 
			Therefore,   the predictors $\hat{z}_{T,n}(x)$ in (\ref{BLUP_N}) and $ \hat{z}_T(x)$ in (\ref{BLUP}) are associated to the random experimental design set $(X_i)_{i\in \mathbb{N}}$.
			We have the following convergence in probability  when $n \rightarrow \infty$ \cite{LoicRegularity}:  
			\begin{equation}\label{convmseD}
			\sigma^2_{T,n}(x) 
\underset{n \rightarrow \infty}{\overset{\mathbb{P}_D}{\longrightarrow}} \sigma^2_{T}(x) 
			\end{equation}
			where $\sigma^2_{T,n}(x) = \EZ{ (\hat{z}_{T,n}(x) - f(x))^2}$ (\ref{MSE_BLUP_N}) and  $\sigma^2_T(x) =  \EZ{(\hat{z}_T(x)-f(x))^2}$.
			Therefore $ \hat{z}_T(x)$ in (\ref{BLUP})  is a relevant  candidate for an idealized version of $\hat{z}_{T,n}(x)$ in (\ref{BLUP_N}) for  the considered  asymptotics $n \rightarrow \infty$. The following proposition  allows for completing the justification of  the relevance of $\hat{z}_{T,n}(x)$.

			\begin{prop}\label{idealizedz}
			Let us consider $f(x)$ a Gaussian process of zero mean and covariance kernel $k(x,\tx)$,    $\hat{z}_{T,n}(x)$ in (\ref{BLUP_N}) and $ \hat{z}_T(x)$ in (\ref{BLUP}) both associated to the random experimental design set  $(X_i)_{i \in \mathbb{N}}$. Consequently  $f(x) = \sum_{p \geq 0} f_p \phi_p(x)$ where $f_p \sim \N{0}{\lambda_p}$, $(f_p)_{p \geq 0}$ independent and $(\phi_p(x))_{p \geq 0}$ defined in (\ref{Mercerk}). The following convergence holds $\forall \delta > 0$ and for any  Borel set $A \subset  \R^2$ such that the Lebesgue measure of  its boundary is  zero:
			\begin{equation}\label{convidealproba}
			 \mathbb{P}_D\left(\left| \probZ{  (\hat{z}_{T,n}(x),f(x))   \in  A} -  \probZ{  ( \hat{z}_T(x), f(x))  \in A} \right|> \delta \right)  \stackrel{n \rightarrow \infty}{\longrightarrow} 0
			\end{equation}
			\end{prop}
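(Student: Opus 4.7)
The plan is to leverage the Gaussian structure under $\mathbb{P}_Z$: for any realization of the design $(X_i)_{i\in\mathbb{N}}$, both $(\hat{z}_{T,n}(x),f(x))$ and $(\hat{z}_T(x),f(x))$ are centered bivariate Gaussian vectors under $\mathbb{P}_Z$ (the first being a linear combination, with $X$-measurable coefficients, of the Gaussian observations $\zn$ and of $f(x)$; the second depending only on the spectral quantities $f_p$ and $\varepsilon_p^*$ and therefore not on $X$). Each of the two probabilities in (\ref{convidealproba}) is thus a deterministic function of a $2\times 2$ covariance matrix, and the proposition reduces to the convergence of the first covariance matrix to the second in $\mathbb{P}_D$-probability, together with a continuity statement for Gaussian measures on Lebesgue-continuity sets.

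I would next compute the two covariance matrices explicitly. Since $\hat{z}_{T,n}(x)$ is the orthogonal projection of $f(x)$ onto the linear span of $\zn$ in $L^2(\mathbb{P}_Z)$, the BLUP orthogonality $f(x)-\hat{z}_{T,n}(x)\perp \hat{z}_{T,n}(x)$ together with (\ref{MSE_BLUP_N}) yields $\covZ{\hat{z}_{T,n}(x)}{f(x)} = \VZ{\hat{z}_{T,n}(x)} = k(x,x) - \sigma^2_{T,n}(x)$. A direct computation from (\ref{BLUP}) using $\EZ{f_p f(x)}=\lambda_p \phi_p(x)$ and $\EZ{z_p^2}=\lambda_p+\sigr^2/T$ yields the same structural identity $\covZ{\hat{z}_T(x)}{f(x)} = \VZ{\hat{z}_T(x)} = k(x,x) - \sigma^2_T(x)$. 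Both covariance matrices therefore have the form
\begin{equation*}
\begin{pmatrix} k(x,x) - \sigma^2 & k(x,x) - \sigma^2 \\ k(x,x) - \sigma^2 & k(x,x) \end{pmatrix},
\end{equation*}
with $\sigma^2=\sigma^2_{T,n}(x)$ in the first case and $\sigma^2=\sigma^2_T(x)$ in the second.

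By (\ref{convmseD}), $\sigma^2_{T,n}(x)$ converges to $\sigma^2_T(x)$ in $\mathbb{P}_D$-probability, and consequently the whole covariance matrix converges to its idealized analogue in $\mathbb{P}_D$-probability. Under the nondegeneracy condition $0<\sigma^2_T(x)<k(x,x)$, the limit determinant $\sigma^2_T(x)(k(x,x)-\sigma^2_T(x))$ is strictly positive, so the limit law $\N{0}{\SSigma(x)}$ admits a continuous Lebesgue density and the hypothesis on $\partial A$ makes $A$ a continuity set. The Portmanteau theorem applied to centered bivariate Gaussians with converging covariance matrices then gives pointwise convergence $\probZ{(\hat{z}_{T,n}(x),f(x))\in A}\to \probZ{(\hat{z}_T(x),f(x))\in A}$ along any realization of the design on which $\SSigma_n(X,x)\to\SSigma(x)$. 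A standard subsequence-extraction argument finally upgrades the $\mathbb{P}_D$-probability convergence of the covariance matrices to the $\mathbb{P}_D$-probability convergence stated in (\ref{convidealproba}).

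The main delicate point I anticipate is handling the potentially degenerate regime $\sigma^2_T(x)\in\{0,k(x,x)\}$: Lebesgue-measure zero of $\partial A$ does not guarantee zero measure under a Gaussian supported on a lower-dimensional subspace, so either such exceptional $x$ must be excluded or the continuity of $\Sigma\mapsto\mathbb{P}_{\mathcal{N}(0,\Sigma)}(A)$ at a singular limit must be addressed by a separate argument exploiting the explicit one-parameter form of the covariance matrix above.
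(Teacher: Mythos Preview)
Your proposal is correct and follows essentially the same route as the paper: both arguments reduce (\ref{convidealproba}) to the $\mathbb{P}_D$-convergence of the $2\times 2$ covariance matrix via the identities $\covZ{\hat{z}_{T,n}(x)}{f(x)}=\VZ{\hat{z}_{T,n}(x)}=k(x,x)-\sigma^2_{T,n}(x)$ together with (\ref{convmseD}), and then conclude by continuity of centered Gaussian probabilities on Lebesgue-continuity sets. The paper phrases this last step through the explicit change of variables $\probZ{\,\cdot\in A}=\phi_2(C_n^{-1/2}A)$ and simply asserts that the limiting covariance $C$ is nonsingular, whereas your Portmanteau-plus-subsequence formulation is equivalent and your discussion of the degenerate regime $\sigma^2_T(x)\in\{0,k(x,x)\}$ is in fact more careful than the paper on that point.
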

	
			\begin{proof}[Proof of Proposition \ref{idealizedz}]
			First of all, we note that for a fixed $   \omega_D \in    \Omega_D$   the random variables $( \hat{z}_{T,n}(x), f(x)) $ and $( \hat{z}_{T}(x), f(x)) $ are  Gaussian since  they are  linear transformations of  $((\varepsilon_i)_{i  \in \mathbb{N}},(f_p)_{p\geq 0})$ and  $((\varepsilon_p^*)_{p \geq 0},(f_p)_{p\geq 0})$    which are both independently distributed from  Gaussian distributions.

			Thanks to the equality $\EZ{ (\hat{z}_{T,n}(x))^2} =  k(x,x) - \sigma^2_{T,n}(x)$ with $k(x,x) = \sum_{p \geq 0} \lambda_p \phi_p(x)^2$,  to the definition of  $\hat{z}_T(x)$ in (\ref{BLUP}) and to the convergence (\ref{convmseD}), the following convergence holds in probability when $n \rightarrow \infty$:
			\begin{equation}
			\EZ{ (\hat{z}_{T,n}(x))^2}  \underset{n \rightarrow \infty}{\overset{\mathbb{P}_D}{\longrightarrow}}
 \EZ{ (\hat{z}_{T}(x))^2} 
			\end{equation}
			Furthermore, we also have the equality  $\EZ{  \hat{z}_{T,n}(x)f(x)}  =  k(x,x) - \sigma^2_{T,n}(x)$ that leads   the convergence for $n \rightarrow \infty$:
			\begin{equation}
			\EZ{  \hat{z}_{T,n}(x)f(x)}  \stackrel{\mathbb{P}_D}{\longrightarrow}  \EZ{  \hat{z}_{T}(x)f(x)}
			\end{equation}		
			 We can deduce the following convergence of the covariance of the two-dimensional Gaussian vector $ ( \hat{z}_{T,n}(x),f(x))$ to the one of the two-dimensional  Gaussian vector $ ( \hat{z}_T(x),f(x))$  when $n \rightarrow \infty$:
			\begin{equation}\label{CntoC}
			\mathrm{cov}_Z\left(   ( \hat{z}_{T,n}(x),f(x)) \right) \stackrel{\mathbb{P}_D}{\longrightarrow} \mathrm{cov}_Z\left( ( \hat{z}_T(x),f(x))\right)
			\end{equation}			
			Furthermore,  the following equality  holds:
			\begin{equation}
			\EZ{   ( \hat{z}_{T,n}(x),f(x))   }  =  \EZ{ ( \hat{z}_T(x),f(x))} = (0,0)
			\end{equation}	
			Let us denote by 	$C_n = \mathrm{cov}_Z\left(   ( \hat{z}_{T,n}(x),f(x)) \right) $, for all Borel sets  $A  \subset  \R^2$ such that $\nu(\partial A )=0$ ($\nu$ denotes the Lebesgue measure and $\partial A $ the boundary of $A$), we have the following equality almost surely with respect to $(\Omega_D, \mathcal{F}_D, \mathbb{P}_D)$:
			\begin{equation*}
			 \probZ{ ( \hat{z}_{T,n}(x), f(x))  \in A} = \phi_2\left( C_n^{-1/2} A\right)
			\end{equation*}
			where $\phi_2$ stands for the bivariate  normal distribution $\N{0}{\I_2}$. We note that $C_n$ is a random variable defined on  the probability space $(\Omega_D, \mathcal{F}_D, \mathbb{P}_D)$. 
			Let us denote by $C =  \mathrm{cov}_Z\left( ( \hat{z}_T(x),f(x))\right)$. The matrix $C$ being nonsingular, the convergence (\ref{CntoC}) implies the following one when $n \rightarrow \infty$:
			\begin{equation*}
			C_n^{-1/2}  \underset{n \rightarrow \infty}{\overset{\mathbb{P}_D}{\longrightarrow}} C^{-1/2}
			\end{equation*}
			Therefore, for all Borel sets  $ A \subset \R^2$ such that $\nu(\partial A )=0$, we have when $n \rightarrow \infty$:
			\begin{equation*}
			\phi_2(C_n^{-1/2}  A)  \underset{n \rightarrow \infty}{\overset{\mathbb{P}_D}{\longrightarrow}}  \phi_2(C^{-1/2}  A)
			\end{equation*} 
			Finally, we can deduce that $\forall \delta > 0$ and for all Borel sets  $ A \subset \R^2$ such that $\nu(\partial(A)=0$, the   convergence in (\ref{convidealproba}) holds.
			\end{proof}

			The function $\hat{z}_T(x)$ is the surrogate model that we consider in this paper. We note that  $\hat{z}_T(x)$ is not equal to  the objective function $f(x)$ since $\sigr^2 /T \neq 0$.  In practical applications, we expect that the idealized model (\ref{BLUP})  is close enough to the actual surrogate model (\ref{BLUP_N}) so that it provides relevant  confidence  intervals. 

			Note that with this formalism $f(x)$ is a random process defined on the probability space  $(\Omega_Z, \mathcal{F}_Z, \mathbb{P}_Z)$. The random series $(z_p)_{p\geq 0}$ is defined on $(\Omega_Z, \mathcal{F}_Z, \mathbb{P}_Z)$ as well.
			In order to study the convergence of $\hat{z}_T(x) $ to the real function $f(x)$, let us consider  the following  equality:
			\begin{equation}\label{conv_MSE}
			\sigma^2_T(x)   =  \sum_{p \geq 0} \frac{ \sigr^2  \lambda_p/T}{\sigr^2 /T + \lambda_p} \phi_p(x)^2
			\end{equation}
			Then, let us define the Integrated Mean Squared Error (IMSE):
			\begin{equation}\label{IMSE}
			\mathrm{IMSE}_T = \int_{\mathbb{R}^d} \sigma_T^2(x) \, d\mu(x)= \EZ{||\hat{z}_T(x) - f(x) ||^2_{L^2_\mu}}
			\end{equation}
			The following equality holds:
			\begin{equation}\label{conv_IMSE}
			\mathrm{IMSE}_T  = \sum_{p \geq 0} \frac{ \sigr^2  \lambda_p/T}{\sigr^2 /T + \lambda_p}
			\end{equation}
			We can link the asymptotic rate of convergence of the IMSE (\ref{conv_IMSE}) with the asymptotic decay of the eigenvalues $(\lambda_p)_{p \geq 0}$ thanks to   the following inequalities \cite{LoicRegularity}:
			\begin{equation}\label{double_inequality}
			B_T^2 /2 \leq \mathrm{IMSE}_T \leq B_T^2
			\end{equation}
			with:
			\begin{equation}\label{BT}
			B_T^2 = \sum_{p \, \mathrm{s.t.} \, \lambda_p \leq \sigma^2_\varepsilon/T} \lambda_p + \frac{\sigma^2_\varepsilon}{T}\# \{ p \, \mathrm{s.t.} \, \lambda_p > \sigma^2_\varepsilon/T  \}
			\end{equation}
			
%

		\section{Asymptotic normality of a  Sobol  index estimator}\label{AsymptoticNormality}
		
		We present in this section the main theorem of this paper about the asymptotic normality of a  Sobol index estimator using Monte-Carlo integrations and the meta-model $\hat{z}_T(x)$ presented in Subsection \ref{asymptotic_idealization}. 
		In the forthcoming development, we suppose that $T$ is an increasing  sequence indexed by  the number of Monte-Carlo $m$  particles used to estimate the variance  and covariance terms involved in  the Sobol  index.  We use the notation $T_m$ to emphasize that $T$  depends on $m$.
		First of all, let us define  in Subsection \ref{indexdef} the Sobol indices and the considered Monte-Carlo  estimator.  

			\subsection{The  Sobol indices}\label{indexdef}
			
			Let us suppose that the input parameter is a random vector $X$ with probability  measure $\mu = \mu_1 \otimes  \mu_2$ on $(\R^{d_1} \times \R^{d_2}, \mathcal{B}(\R^{d_1} \times \R^{d_2}))$ with $d = d_1+d_2$.
			We consider the random vector $(X, \tX)$  defined   on the probability space $(\Omega_X, \mathcal{F}_X, \mathbb{P}_X)$  with $X = (X^1, X^2)$ and $\tX = (X^1, \tX^2)$ where $X^1$ is a random vector with values in $\R^{d_1}$ and  with distribution  $\mu_1$, $X^2$ and $\tX^2$ are random vectors with values  in $\R^{d_2}$ with distribution  $\mu_2$,  and $X^1$, $X^2$ and $\tX^2$ are independent.
			
			We are interested in the following closed Sobol index of parameter $X^1$  (see \cite{sobol1993}, \cite{Sal00}):
			\begin{equation}\label{SOBIND}
			S^{X^1} =  \frac{V^{X^1}}{V} = \frac{\VX{\EX{f(X)|X^1}}}{\VX{f(X)}} = \frac{\covX{f(X)}{ f(\tX)}}{\VX{f(X)}}
			\end{equation}
			where the random variables $f(X)$ and $f(\tX)$ are defined on  the   product  probability space $ (\Omega_Z \times \Omega_X, \sigma \left( \mathcal{F}_Z \times \mathcal{F}_X \right), \mathbb{P}_Z \times \mathbb{P}_X )$ and $S^{X^1}$, ${V^{X^1}}$ and ${V}$ are  defined  on  the probability space $(\Omega_Z, \mathcal{F}_Z, \mathcal{F}_Z)$. The Sobol index ${S^{X^1}}$ can be simply interpreted as a measure of the part of variance of $f(x)$ explained by the factor $X^1$. We note that $\VX{}$, $\EX{}$, $\covX{}{}$ stand for the variance, the expectation and the covariance in the probability space $(\Omega_X, \mathcal{F}_X, \mathbb{P}_X)$.
			 
			Furthermore, let us consider  the sequence $(X_i, \tX_i)_{i=1}^\infty$   of random variables defined  on  $(\Omega_X, \mathcal{F}_X, \mathbb{P}_X)$   independent and identically distributed such that $(X_i,\tX_i) \eqL (X,\tX)$ for all $i \in \mathbb{N}^*$ ($\eqL$ stands for the equality in distribution).   We use the following classical Monte-Carlo estimator for (\ref{SOBIND}) (see \cite{sobol1993}):
			\begin{equation}\label{SOBOLf}
					S^{X^1}_m = \frac{V_{ m}^{X^1}}{V_{ m}}=\frac{m^{-1}\sum_{i=1}^m{f(X_i)f(\tX_i)} - 
			m^{-2}\sum_{i,j=1}^mf(X_i)f(\tX_j)
			}{
			m^{-1}\sum_{i=1}^mf^2(X_i) - m^{-2}(\sum_{i=1}^mf(X_i))^2
			}
			\end{equation}
			where the random variables $S^{X^1}_m$, ${V_{ m}^{X^1}}$ and  ${V_{ m}}$ are defined on the   probability space $ (\Omega_Z \times \Omega_X, \sigma \left( \mathcal{F}_Z \times \mathcal{F}_X \right),$ $ \mathbb{P}_Z \times \mathbb{P}_X )$.

			Furthermore,  after substituting $f(x)$ with the meta-model  $\hat{z}_{T_m}(x)$, we obtain the following estimator:
			\begin{equation}\label{SOBOLz}
			S^{X^1}_{T_m,m} =  \frac{V_{T_m,m}^{X^1}}{V_{T_m,m}}=\frac{m^{-1}\sum_{i=1}^m{\hat{z}_{T_m}(X_i)\hat{z}_{T_m}(\tX_i)} - 
			m^{-2}\sum_{i,j=1}^m\hat{z}_{T_m}(X_i)\hat{z}_{T_m}(\tX_j)
			}{
			m^{-1}\sum_{i=1}^m\hat{z}_{T_m}^2(X_i) - m^{-2}(\sum_{i=1}^m\hat{z}_{T_m}(X_i))^2
			}
			\end{equation}
			where  the random variables $S^{X^1}_{T_m,m} $, ${V_{T_m,m}^{X^1}}$,  ${V_{T_m,m}}$, $\hat{z}_{T_m}(X_i)$ and $\hat{z}_{T_m}(\tX_j)$ are defined on the product probability space $ (\Omega_Z \times \Omega_X, \sigma \left( \mathcal{F}_Z \times \mathcal{F}_X \right), \mathbb{P}_Z \times \mathbb{P}_X )$.

			\subsection{Theorem on the asymptotic normality of the Sobol index estimator}
			
			The  theorem below gives the  relation between $T_m$ and $m$ which ensures the asymptotic normality of the estimator $ S^{X^1}_{T_m,m}$ when $m  \rightarrow \infty$.
			We note that $S^{X^1}_{T_m,m}$ is the  estimator of the Sobol  index $ S^{X^1} = \covX{f(X)}{ f(\tX)}/\VX{f(X)}$ when we replace the true function by the surrogate model  (\ref{BLUP}) and when we use the  Monte-Carlo estimator (\ref{SOBOLf}) for the variance and covariance  involved in the Sobol index.
			
			\begin{theorem}[Asymptotic normality of $S^{X^1}_{T_m,m}$]\label{normalityasymp}
			Let us consider the estimator  $ S^{X^1}_{T_m,m}$ (\ref{SOBOLz}) of $ S^{X^1} $  (\ref{SOBIND}) with $T_m$ an increasing function of $m \in \mathbb{N}^*$. We have the following convergences:
			\begin{itemize}
			\item[] If $mB^2_{T_m} \stackrel{m\rightarrow  \infty}{\longrightarrow}0$, then for all interval $I \in \R$ and $\forall \delta > 0$, we have the   convergence:
			\begin{equation}
			\mathbb{P}_Z \left( \left| \mathbb{P}_X \left(\sqrt{m} \left( S^{X^1}_{T_m,m} - S^{X^1} \right) \in I \right) -  \int_I{g(x)dx} \right| > \delta \right) \stackrel{m \rightarrow \infty}{\longrightarrow} 0
			\end{equation}
			where $g(x)$ is the probability density function of a zero-mean Gaussian random variable  with variance:
			\begin{equation}\label{variancesobolasymptotic}
			 \frac{\VX{\left( f(X) - \EX{f(X)}\right)\left( f(\tX) - \EX{f(X)} - S^{X^1} f(X) + S^{X^1} \EX{f(X)} \right)}}{\left( \VX{f(X)}\right)^2}
			\end{equation}
			with    $B^2_{T_m}$  given by  (\ref{BT}).
			\item[] If $mB^2_{T_m} \stackrel{m\rightarrow  \infty}{\longrightarrow}\infty$, then $\forall \delta > 0$, $\exists C > 0$ such that :
			\begin{equation}
			\mathbb{P}_Z \left( \left| \mathbb{P}_X \left( B^{-1}_{T_m}  \left( S^{X^1}_{T_m,m} - S^{X^1} \right)  \geq C  \right)- 1 \right| >  \delta \right)  \stackrel{m \rightarrow \infty}{\longrightarrow}  0
			\end{equation}
			\end{itemize}
			\end{theorem}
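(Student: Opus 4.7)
My plan is to decompose
\begin{equation*}
S^{X^1}_{T_m,m} - S^{X^1} = \bigl(S^{X^1}_{T_m,m} - S^{X^1}_m\bigr) + \bigl(S^{X^1}_m - S^{X^1}\bigr)
\end{equation*}
and analyse each summand conditionally on $\omega_Z \in \Omega_Z$: once $\omega_Z$ is fixed, the functions $f$ and $\hat z_{T_m}$ are deterministic and the inner probability is only with respect to $\mathbb{P}_X$, so the estimators become classical $V$-statistics of i.i.d.\ vectors $(X_i,\tX_i)$.

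For the pure Monte-Carlo term I would reproduce the argument of Janon et al.\ conditionally on $\omega_Z$: the multivariate CLT applied to the empirical means $m^{-1}\sum_i (f(X_i)f(\tX_i),f(X_i),f^2(X_i),f(\tX_i))$ combined with the delta method around $(\EX{f(X)f(\tX)},\EX{f(X)},\EX{f(X)^2},\EX{f(X)})$ yields
\begin{equation*}
\sqrt{m}\,(S^{X^1}_m - S^{X^1}) \overset{\mathcal{L},\,\mathbb{P}_X}{\longrightarrow} \mathcal{N}\!\left(0,\sigma^2(\omega_Z)\right)
\end{equation*}
with $\sigma^2(\omega_Z)$ equal to (\ref{variancesobolasymptotic}), for $\mathbb{P}_Z$-almost every $\omega_Z$. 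The required $L^4(\mu)$ integrability of $f$ follows from $\sup_{x\in Q}k(x,x)<\infty$ together with standard Gaussian moment bounds.

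For the meta-model term I would write $e_{T_m}(x) = \hat z_{T_m}(x) - f(x)$ and expand the numerator and denominator of (\ref{SOBOLz}) as the corresponding quantities in (\ref{SOBOLf}) plus sums of cross products of $f$ and $e_{T_m}$. Cauchy--Schwarz in $L^2_\mu$ and a LLN argument in $\mathbb{P}_X$, conditioned on $\omega_Z$, reduce each cross-term bound to $\|e_{T_m}\|^2_{L^2_\mu}$, whose $\mathbb{P}_Z$-expectation is $\mathrm{IMSE}_{T_m}\asymp B^2_{T_m}$ by (\ref{conv_IMSE})--(\ref{double_inequality}). Using Markov in $\mathbb{P}_Z$ and the fact that $V_{T_m,m}\to V=\VX{f(X)}>0$ with high joint probability (which follows from the same expansion and the LLN applied to $f^2(X_i)$), one deduces
\begin{equation*}
S^{X^1}_{T_m,m} - S^{X^1}_m = O_{\mathbb{P}_Z\otimes\mathbb{P}_X}(B_{T_m}).
\end{equation*}
In the regime $mB^2_{T_m}\to 0$ this is $o(m^{-1/2})$, so Slutsky conditional on $\omega_Z$ together with a Portmanteau argument in $\mathbb{P}_Z$ gives the first conclusion. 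In the regime $mB^2_{T_m}\to\infty$ the meta-model term dominates and one has to upgrade the upper bound to a matching lower bound: I would isolate the leading term of the expansion, typically $m^{-1}\sum_i f(\tX_i)e_{T_m}(X_i)$, compute its conditional $\mathbb{P}_X$-variance, show it is of order $B^2_{T_m}$ on a $\mathbb{P}_Z$-set of probability tending to $1$, and conclude by Paley--Zygmund.

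The main obstacle is this last step. The upper bound in the first case is essentially bookkeeping once the decomposition is in place, and the CLT part is a conditional version of Janon et al., but producing a \emph{sharp} lower bound of order $B_{T_m}$ for the meta-model error requires identifying a leading term in the expansion that is genuinely non-degenerate, while simultaneously managing two levels of randomness: convergence in $\mathbb{P}_Z$-probability of a quantity which is itself a $\mathbb{P}_X$-probability of an event whose law depends on $\omega_Z$. The bivariate Gaussianity of $(\hat z_{T_m}(x),f(x))$ established in Proposition \ref{idealizedz} and the spectral representation (\ref{BLUP}) should be the key technical tools to compute the relevant variances explicitly.
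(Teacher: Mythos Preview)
Your decomposition through $S_m^{X^1}$ (the Monte-Carlo estimator based on $f$) differs from the paper's, which splits through the \emph{surrogate} index $S_{T_m}^{X^1}=\covX{\hat z_{T_m}(X)}{\hat z_{T_m}(\tX)}/\VX{\hat z_{T_m}(X)}$, writing $S_{T_m,m}^{X^1}-S^{X^1}=(S_{T_m,m}^{X^1}-S_{T_m}^{X^1})+(S_{T_m}^{X^1}-S^{X^1})$. For the regime $mB_{T_m}^2\to 0$ your route is a legitimate alternative and in one respect simpler: because $f$ does not depend on $m$, the CLT for $S_m^{X^1}-S^{X^1}$ is the ordinary i.i.d.\ one, whereas the paper must run the Lindeberg--Feller triangular-array CLT on $S_{T_m,m}^{X^1}-S_{T_m}^{X^1}$. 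The price is that your two pieces converge in different modes ($\mathbb{P}_Z$-a.s.\ for the CLT piece, only in $\mathbb{P}_Z$-probability for the meta-model piece), so ``Slutsky conditional on $\omega_Z$'' is not literally available. The paper handles the analogous issue by invoking Skorokhod's representation theorem to upgrade the $\mathbb{P}_Z$-in-probability convergence of $\|e_{T_m}\|_{L^2_\mu}$ and $\|e_{T_m}\|_{L^6_\mu}$ to almost-sure convergence on an auxiliary space, and only then works pointwise in $\tilde\omega_Z$; you will need either this device or a careful Portmanteau/$\varepsilon$-fattening argument to combine your two pieces.

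For the regime $mB_{T_m}^2\to\infty$ there is a concrete gap. The term you single out, $m^{-1}\sum_i f(\tX_i)e_{T_m}(X_i)$, has conditional $\mathbb{P}_X$-variance $m^{-1}\mathrm{var}_X\!\bigl(f(\tX)e_{T_m}(X)\bigr)=O(m^{-1})$, not $O(B_{T_m}^2)$; since here $m^{-1}=o(B_{T_m}^2)$, a Paley--Zygmund argument on this fluctuation cannot produce a lower bound of order $B_{T_m}$. The quantity that is genuinely of order $B_{T_m}$ is not a fluctuation but the \emph{bias} $S_{T_m}^{X^1}-S^{X^1}$, which is $\mathbb{P}_X$-deterministic once $\omega_Z$ is fixed. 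This is precisely why the paper's decomposition is the right one for the lower bound: it isolates $S_{T_m}^{X^1}-S^{X^1}$ explicitly, writes it as $\mathrm{var}_X(e_{T_m}(X))^{1/2}$ times a bounded factor, and then appeals to the \emph{lower} half of (\ref{double_inequality}) to force this bias to be at least of order $B_{T_m}$. Your decomposition buries the bias inside $S_{T_m,m}^{X^1}-S_m^{X^1}$ together with Monte-Carlo noise, which is why you are led to a second-moment tool that cannot see it.
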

			
			Theorem \ref{normalityasymp} is of interest since it gives how fast $T_m$ has to increase with respect to $m$ so that the error of the surrogate modelling and the one of the Monte-Carlo sampling have the same order of magnitude. Indeed, for a given size $m$ of the Monte-Carlo sample, it is not necessary to use a too large $T_m$ otherwise the Monte-Carlo  estimation error will   dominate (it corresponds to  the case $mB^2_{T_m} \stackrel{m\rightarrow  \infty}{\longrightarrow}0$). On the other hand, if $T_m$ is taken too large  (it corresponds to the case $mB^2_{T_m} \stackrel{m\rightarrow  \infty}{\longrightarrow}\infty$), the estimation error is   dominated   by  the meta-model approximation.  
			
			Furthermore, we see that when $mB^2_{T_m} \stackrel{m\rightarrow  \infty}{\longrightarrow}0$, the asymptotic normality is assessed for the estimator $S^{X^1}_{T_m,m}$ with an explicit  variance given in equation (\ref{variancesobolasymptotic}). By studying in  (\ref{variancesobolasymptotic}) the cases $S^{X^1} = 0$ and $S^{X^1} = 1$ we  see  that the given estimator is more precise for large values of Sobol indices than for small ones. A  more efficient estimator   for small index values is given in \cite{sobol2007estimating}.
			
			We show in Section \ref{SOBOLex} that the product $mB^2_{T_m}$ can easily be handled when we have an  explicit  formula for the  asymptotic decay of the eigenvalues of the Mercer's decomposition of $k(x,\tx)$. The proof of Theorem \ref{normalityasymp} is given in   Appendix \ref{ploufplouf}. It is based on the Skorokhod's representation theorem \cite{Bil99}, the Lindeberg-Feller central limit theorem, and the Delta method \cite{vaart98}.

		\section{Examples of asymptotic normality  for Sobol's index}\label{SOBOLex}
		
		According to the previous developments, the desired asymptotic normality is assessed under the   assumption $m B_{T_m}^2 \stackrel{m \rightarrow \infty}{\longrightarrow} 0$. In the remainder of this section, we present relations between $T_m$ and $m$ which lead the convergence  $m B_{T_m}^2 \stackrel{m  \rightarrow \infty}{\longrightarrow} 0$  for    some usual kernels.
		
			\subsection{Asymptotic normality with  $d$-tensorised  Mat\'ern-$\nu$ kernels}
			
			We focus here on the d-tensorised Mat\'ern-$\nu$ kernel with regularity parameter $\nu > 1/2$ \cite{S99}, \cite{R06}:
			\begin{displaymath}
			k(x,\tx) = \prod_{i=1}^d  \frac{2^{1-\nu}}{\Gamma(\nu)} \left(\frac{\sqrt{2\nu}|x^i - \tx^i| }{\theta_i} \right)^\nu K_\nu \left( \frac{\sqrt{2\nu}|x^i - \tx^i| }{\theta_i} \right)
			\end{displaymath}
			where $K_\nu$ is the modified Bessel function \cite{Ab65}.
			 The eigenvalues   of this kernel satisfy the following asymptotic behavior \cite{PU11}:
			\begin{displaymath}
			\lambda_p =\phi(p) , \quad p \gg 1
			\end{displaymath}
			where $\phi(p) =  \left( \mathrm{log}(1+p)^{2(d-1){(\nu+1/2)}} \right)p^{-2{(\nu+1/2)}}\left(1 + \mathrm{O}(1/p)\right) $. 
			Therefore, for $T_m \gg 1$:
			\begin{displaymath}
			   B_{T_m}^2  \approx  \mathrm{log}(T_m/\sigr^2)^{d-1}\left(\frac{\sigr^2}{T_m}\right)^{1-1/2{(\nu+1/2)}}
			\end{displaymath}
			Section \ref{AsymptoticNormality} suggests that the asymptotic normality of the Sobol's index estimator is assessed when:
			\begin{displaymath}
			m B_{T_m}^2  \stackrel{m}{\longrightarrow} 0
			\end{displaymath}
			Let us consider that $T_m$ is such that:
			\begin{equation}\label{criticalpointmatern}
			 \mathrm{log}(T_m/\sigr^2)^{d-1}\left(\frac{\sigr^2}{T_m}\right)^{1-1/2{(\nu+1/2)}} = 1/m 
			\end{equation}
			It corresponds to the critical point $m B_{T_m}^2 \approx 1$.  In this  case, the error originates both from  the meta-model  approximation error and  the Monte-Carlo estimation error.
			Equation (\ref{criticalpointmatern}) leads to the following critical budget: 
			\begin{equation}\label{critmat}
			\frac{T_m}{\sigr^2} = \sigr^2  m^{1/(1-1/2{(\nu+1/2)})}\mathrm{log}\left( m \right)^{(d-1)}, 
			\end{equation}
			and, the asymptotic normality is assessed for:
			\begin{equation}\label{eq63}
			\frac{T_m}{\sigr^2} = \sigr^2  m^{1/(1-1/2{(\nu+1/2)})+\alpha}\mathrm{log}\left( m \right)^{(d-1)}, \, \forall \alpha > 0
			\end{equation}
			In practice, we want to minimize the budget allocated to the simulator and thus consider the case $\alpha$ tends to zero. As a consequence, for applications we will consider the allocation of the critical point (\ref{critmat}).
			
			\subsection{Asymptotic normality for   $d$-dimensional Gaussian kernels}\label{asympSOBOLgaussex}
			
			Let us consider the $d$-dimensional Gaussian kernel:
			\begin{equation}\label{GaussianKernel}
			k(x,\tx) = \exp \left( -\frac{1}{2} \sum_{i=1}^d \frac{(x^i - \tx^i)^2}{\theta_i^2}\right)
			\end{equation}			
			 Thanks to \cite{To06}, we have the following upper bound for   the eigenvalues:
			\begin{equation} \label{EigenGauss}
			\lambda_p \leq c' \mathrm{exp}\left( -c p^{1/d}\right) 
			\end{equation}
			with $c$ and $c'$ constants.
			From this inequality, we can deduce that $\exists C > 0$ such that:
			\begin{displaymath}
			 B_{T_m}^2  \approx C \mathrm{log}(T_m/\sigr^2)^{d}\left(\frac{\sigr^2}{T_m}\right) 
			\end{displaymath}
			Therefore, the critical budget corresponding to the critical point $m B_{T_m}^2 \approx 1$   is  given by 
			\begin{equation} \label{critGauss}
			T_m/\sigr^2 =  m  \mathrm{log}\left(m\right)^{d} 
			\end{equation}
			and the asymptotic normality for the Sobol index estimator  is assessed with:
			\begin{equation} 
			T_m/\sigr^2 =  m^{1+\alpha} \mathrm{log}\left(m\right)^{d} ,\, \forall \alpha > 0
			\end{equation}
			We note that the condition  is only sufficient  since we have an inequality in (\ref{EigenGauss}).

			\subsection{Asymptotic normality for  $d$-dimensional Gaussian kernels with a Gaussian measure $\mu(x)$}\label{asympSOBOLgaussex2}
	
			Let us consider a Gaussian measure $\mu \sim \mathcal{N}(0,\sigma_\mu^2\I)$ in dimension $d$ and the   Gaussian kernel (\ref{GaussianKernel}).
			As presented in \cite{zhu1998gaussian}, we have analytical expressions for the  eigenvalues and eigenfunctions of $k(x,\tx)$:
			\begin{displaymath}
			\lambda_p = \prod_{i=1}^d\sqrt{\frac{2a}{A_i}}B_i^p
			\end{displaymath}
			\begin{displaymath}
			\phi_p(x) =  \exp \left(-\sum_{i=1}^d (c_i-a)(x^i)^2 \right) \prod_{i=1}^d H_p( \sqrt{2c_i}x^i)
			\end{displaymath}
			where $H_p(x) = (-1)^p \exp (x^2) \frac{d^p}{dx^p}\exp (-x^2)$ is the $p^\mathrm{th}$ order Hermite polynomial (see \cite{gradshteuin2007table}),  $a = 1/(2\sigma_\mu)^2$, $b_i = 1/(2\theta_i^2)$ and
			\begin{displaymath}
			c_i = \sqrt{a^2 + 2ab_i}, \quad A_i = a+ b_i + c_i,  \quad B_i = b_i/A_i.
			\end{displaymath}
			Therefore,  the  eigenvalues      satisfy the following asymptotic behavior
			\begin{equation} \label{EigenGaussmu}
			\lambda_p  \propto \exp \left( -   p    \xi_d \right)
			\end{equation}
			where $\xi_d =  \sum_{i=1}^d \log \left(1/B_i\right) $.
			For $T_m \gg 1$, we have:
			\begin{equation}\label{BTGauss}
			B_{T_m}^2 \approx  \left( {\sigr^2}/{T_m}\right) \log \left(  {T_m}/{\sigr^2}  \right)/\xi_d
			\end{equation}
			Let us consider the     critical point $
			B_{T_m}^2 = 1/m $.
			Then,  the critical  budget is given by
			\begin{displaymath}
			 \frac{T_m}{\sigr^2} = \xi_d m \log (m)
			\end{displaymath}
			and  the asymptotic normality is assessed for:
			\begin{equation}\label{eq63}
			\frac{T_m}{\sigr^2} = \xi_d m^{1+\alpha} \log (m),  \, \forall \alpha > 0
			\end{equation}

		\section{Numerical illustration}
		
		The purpose of this section is to perform a global sensitivity analysis  of a stochastic code solving  the following heat equation:
		\begin{equation}
		\frac{\partial u}{\partial t}(x,t) - \frac{1}{2}  \Delta  u(x,t) = 0 
		\end{equation}
		with $x \in \mathbb{R}^d$ and  $u(x,0) = g(x) =  \exp(-\sum_{i=1}^dx_i^2/(2\sigma_{g,i}^2))$. 
		The function $u(x,t)$ has   the following probabilistic representation:
		\begin{equation}
		u(x,t) = \mathbb{E}_{W_t}[g(x+W_t)]
		\end{equation}
		where $W_t$ is the 1-dimensional  Brownian motion.
		We evaluate the function $u(x,t)$ through the following stochastic code:
		\begin{equation}
		u^\mathrm{code}_r(x,t) =\frac{1}{r} \sum_{i=1}^r  \left( \frac{1}{s}\sum_{j=1}^{s}g(x+W_{t,i,j}) \right)
		\end{equation}
		where the number of replications $r$ tunes the precision of the output, $s = 30$  and $(W_{t,i,j})_{\substack{i=1,\dots,r\\j=1,\dots,s}}$ are sampled from  a Gaussian random variable  of mean zero and variance $t$.
		
		We note that there is a closed  form expression for the  solution of the considered heat equation, that will allow is to compute exactly the Sobol indices and to assess the quality of our estimate:
		\begin{equation}
		u(x,t) = \prod_{i=1}^d   \left( \frac{\sigma_{g,i}^2}{\sigma_{g,i}^2+t} \right)^{1/2} \exp \left( -\frac{x_i^2}{2(\sigma_{g,i}^2 + t)} \right)
		\end{equation}
		
			\subsection{Exact Sobol indices}
			
			Let us consider that $x$ is a random variable $X $ defined on $ (\Omega_X, \mathcal{F}_X, \mathbb{P}_X)$ such that  $X \sim \mathcal{N}\left( 0, \sigma_\mu^2 \I\right)$. We are interested for the application in the first order Sobol indices, i.e. the contribution of $(X^j)_{j=1,\dots,d}$. 
			By straightforward calculations it can be shown that:
			\begin{equation}\label{exactSj}
			S^{X^j} =  \frac{V^{X^j}}{V}= \frac{\mathrm{var}_X(\mathbb{E}_X[u(X,t)|X^j])}{\mathrm{var}_X(u(X,t))} = \frac{B_j - 1 }{\left(\prod_{i=1}^d B_i \right)-1}
			\end{equation}
			where $X^j$ is the $j^{\mathrm{th}}$ component of the random vector $X$ with $j=1,\dots,d$ and 
			\begin{displaymath}
			B_j = \sigma_\mu \left(\frac{2}{t} - \frac{2}{t^2}\left( \frac{1}{t} + \frac{1}{\sigma_{g,i}^2} \right)^{-1} + \frac{1}{\sigma_\mu^2} \right)^{-\frac{1}{2}} \left( \frac{1}{t} + \frac{1}{\sigma_\mu^2} - \frac{1}{t^2}\left( \frac{1}{t} + \frac{1}{\sigma_{g,i}^2}\right) ^{-1} \right)
			\end{displaymath}
			Therefore, the importance measure of the $j^\mathrm{th}$ input is directly linked with the dispersion parameter $\sigma_{g,i}^2$ of the function $g(x)$. Furthermore, when $t$ tends to the infinity, the response $u(x,t)$ tends to zero as the variance of the main effect.  In this section, we consider the response   at $t=1$.
			
			\subsection{Model selection}
			
			Let us consider a Gaussian process of covariance $k_u(x,\tx)$ and  mean $m_u$ to surrogate $u(x,t)$ at $t=1$. We consider the predictive mean and variance presented in equations (\ref{BLUP_N}) and (\ref{MSE_BLUP_N}).
			As the response $u(x,t)$ is smooth,  we choose a squared exponential covariance kernel:
			\begin{displaymath}
			k_u(x,\tx) = \sigma^2 \exp \left( -\frac{1}{2} \sum_{i=1}^d \frac{(x^i - \tx^i)^2}{\theta_i^2} \right)
			\end{displaymath}
			Furthermore, as $u(x,t)$ tends to zero when $x$  tends to the infinity, we consider  that $m_u = 0$. Indeed, we want that the model tends   to   zero when we  move  away from the design points.
			
			The   experimental design  set $\D$   is composed of $n = 3000$ training points $x_i^\mathrm{train}$ sampled from the multivariate normal distribution $\mathcal{N}\left( 0, \sigma_\mu^2 \I\right)$  with $\sigma_\mu=2$ and $d=5$. Furthermore, the initial budget is $T_0 = 3000$.  It corresponds to a unique repetition $r_0=1$ at each point of $\D$.  The $n$ observations of $u^\mathrm{code}_{r_0}(x,1)$ at points in $\D$ are denoted by $\uu^n$.

			The hyper-parameters $\sigma^2$, $\theta$ and $\sigr^2$ are estimated by maximizing the marginal Likelihood \cite{R06}:
			\begin{displaymath}
			-\frac{1}{2}  \left( \uu^n\right)' \left( \sigma^2 \K + \sigr \I\right)^{-1} \uu^n - \frac{1}{2} \det \left( \sigma^2 \K + \sigr \I\right)
			\end{displaymath}
			where $\K = [k_u(x_i,x_j)]_{i,j=1,\dots,n}$. To solve the maximization problem, we have first randomly generated a set of 1,000 parameters $(\sigma^2, \theta, \sigr)$ on the domain $(0,10)\times(0,2)^d \times(0,1)$ and we have started a quasi-Newton based maximization from the 10 best parameters using the BFGS method.
			We obtain the following parameter estimations.
			\begin{itemize}
			\item  $\hat{\theta} =  \begin{pmatrix}1.01 &1.02& 1.03 &1.00& 1.07  \end{pmatrix}$
			\item  $\hat \sigma^2  =  1.46$
			\item $\hat \sigma_\varepsilon^2 = 6.74.10^{-2}$
			\end{itemize}
			Furthermore, the dispersion term of $g(x)$ are set to:
			\begin{itemize}
			\item  $(\sigma_{g,i}^2)_{i=1,\dots,d}  =    (5,3,2,1,1)$
			\end{itemize}
			
			\subsection{Convergence of IMSE$_T$}
			As presented in Subsection \ref{asymptotic_idealization} and Section \ref{AsymptoticNormality}, the asymptotic normality of the Sobol index estimator  is  closely related to the convergence of the generalization error IMSE$_T$ (\ref{IMSE}). Therefore, in order to effectively estimate the confidence intervals of the  estimators, we have to   characterize  this convergence.
			Especially, we have to take into account the initial budget used to select the model.
			The  value of  IMSE$_{T_0}$  where  $T_0$   corresponds to the initial budget  allocated to $\D$ is estimated to $\mathrm{IMSE}_{T_0} = 6.06.10^{-1}$.
			According to (\ref{BTGauss}), we have the following convergence rate for   IMSE$_T$ with respect to $T$:
			\begin{displaymath}
			\mathrm{IMSE}_T  \sim \left( {\sigr^2}/{T}\right) \log \left(  {T}/{\sigr^2}  \right)/\xi_d
			\end{displaymath}
			Therefore, from an initial budget $T_0$ we   expect that   IMSE$_T$   as a function of $T$ decays as:
			\begin{displaymath}
			\mathrm{IMSE}_T = \mathrm{IMSE}_{T_0} \frac{T_0 \log \left(  {T}/{\sigr^2}  \right)}{T  \log \left(  {T_0}/{\sigr^2}  \right)}
			\end{displaymath}
			The critical ratio $m B_T^2 = 1$ presented in Section \ref{SOBOLex} leads to the following budget:
			\begin{equation}\label{budgetapplic}
			T =\frac{m}{C} \log \left( \frac{m}{C \sigr^2 } \right)
			\end{equation}
			with $
			C =     \log \left(  {T_0}/{\sigr^2} \right)/(T_0 \mathrm{IMSE}_{T_0})
			$.

			\subsection{Confidence intervals for  the Sobol index estimations}
			According to Theorem \ref{normalityasymp}, if $T$ follows the relation in (\ref{budgetapplic}), the Sobol index estimator  presented in Subsection \ref{indexdef}  is asymptotically distributed with respect to a Gaussian random variable centered on the true index and with   variance given in (\ref{variancesobolasymptotic}).
			We use this property to build $90\%$ confidence intervals on the  estimations of $(S^j)_{j=1,\dots,d}$   (\ref{exactSj}). The exact values of the  Sobol indices (\ref{exactSj}) are given by:
		\begin{displaymath}
		(S^j)_{j=1,\dots,d} = (0.052, 0.088,  0.124, 0.194, 0.194)
		\end{displaymath}

			 Remember  that $m$ represents the number of  particles  for the Monte-Carlo integrations and $T$ is the budget used to construct the surrogate model $\hat z_T(x)$. 	
			In order to illustrate   the relevance of (\ref{budgetapplic}), we  consider the following equation:
			\begin{displaymath} 
			T = \sigr^2 \frac{m^\alpha}{C} \log \left( \frac{m}{C} \right)
			\end{displaymath}
			with different values of $\alpha$ - the right value being $\alpha=1$ - and  different  values of $m$. For each combination $(\alpha, m)$, we estimate the Sobol indices  with the estimator (\ref{SOBOLz})  and from 500 different Monte-Carlo samples $(x_i^{\mathrm{MC}})_{i=1,\dots,m}$. For each sample we evaluate the $90\%$ confidence intervals thanks to (\ref{variancesobolasymptotic}) and we check if  the estimations  are covered or not. The result of the procedure is presented in  Table \ref{tablesobol}.

		\begin{table}[H]
		\begin{center}
		\begin{tabular}{|c|c||c|c|c|c|c|}
		\hline
		$m$ & $\alpha$ & $S^1$ & $S^2$ & $S^3$ & $S^4$ & $S^5$ \\
		\hline
		\hline
		1,000 & 0.8 & 88.00 & 86.20 &  87.60  & 88.20 &  86.40  \\
		1,000 & 0.9 & 89.00 & 91.80 & 89.60 & 86.20 & 86.00 \\
		1,000 & 1.0 & 88.40 & 87.00 & 89.40 & 87.60 & 90.80 \\
		1,000 & 1.1 & 88.00 & 89.40 & 88.80 & 87.00 & 88.60  \\
		1,000 &  1.2 & 90.00 & 91.00  & 86.60 & 88.80  &89.00 \\
		\hline
		\hline
		3,000 & 0.8 & 88.00 & 87.60   &  86.60& 87.80 & 87.20 \\
		3,000 & 0.9 & 89.80  & 87.80  & 87.40 & 88.60  & 88.00 \\
		3,000 & 1.0 &  89.40  & 90.40 & 89.20 &  89.40   & 89.60  \\
		3,000 & 1.1 & 90.40 &90.60 & 91.00 & 91.60& 90.80   \\
		3,000 & 1.2 & 92.00  & 91.80& 92.00 &  91.40 &  91.40 \\
		\hline
		\hline
		5,000 & 0.8 & 87.60 & 86.20 & 87.40 &  88.20& 86.40 \\
		5,000 & 1.0 & 89.20 & 89.40  & 90.80 & 89.80 & 89.60  \\
		5,000 & 1.2 & 92.00& 91.40 & 92.80 & 90.60  & 92.20 \\
		\hline
		\end{tabular}
		\end{center}
		\caption{Coverage rates for $(S^j)_{j=1,\dots,d}$ in percentage. The confidence intervals are built from the variance presented in (\ref{variancesobolasymptotic})    in Theorem \ref{normalityasymp}. The theoretical rates is $90\%$ and the estimations is performed from 500 different Monte-Carlo samples.}
		\label{tablesobol}
		\end{table}

		We see in Table \ref{tablesobol} that the asymptotic behavior is not reached for $m=1,000$ Monte-Carlo particles since the coverage is  globally  too low in this case  for every $\alpha$. Furthermore, for $m=3,000$ and $m= 5,000$, we see that the coverage is globally better for $\alpha = 1$ than for  the other values. Indeed, the covering rate is  underestimated for $\alpha < 1$ and often  overestimated for $\alpha > 1$ whereas it is always around $90\%$ for $\alpha =1$.
		Furthermore, the confidence intervals seem to be well evaluated either for large values of $S^j$ with $S^4$ and $S^5$,  for  intermediate values of $S^j$ with $S^3$  or for  small values of   $S^j$ with $S^1$ and $S^2$. 
		 Therefore, this example emphasizes the relevance of the asymptotic normality for the Sobol index estimators presented in Theorem \ref{normalityasymp}.

		\section{Conclusion}

		This paper focuses  on the estimation of the Sobol indices to perform    global sensitivity analysis for  stochastic simulators.
		We suggest an index estimator which combines a Monte-Carlo scheme to estimate the integrals involved in the index definition and a Gaussian process regression to surrogate the   stochastic simulator.  The surrogate model is necessary since the Monte-Carlo integrations require  an important number of simulations. 

		In a  stochastic simulator  framework, for a fixed computational budget the observation noise variance is   inversely proportional to the number of simulations.  In this paper, we consider the   special case of  a large number of observations with an important uncertainty on the output.
		This choice   allows us to consider an idealized version of the regression problem from which  we can define a surrogate model which is tractable for our purpose.

		 In particular we aim to build confidence intervals for the index estimator  taking into account both the uncertainty due to the Monte-Carlo integrations and the one due to the surrogate modelling. 
		To handle this point, we present a theorem providing sufficient conditions to ensure the asymptotic normality of the suggested estimator. The proof of the theorem is the main point of this paper. It gives a closed form expression  for the variance of the  asymptotic distribution of the estimator.  From it we can easily estimate  the desired confidence  intervals.
		Furthermore, a strength of the suggested theorem is that it gives the relation between the number of   particles for the Monte-Carlo integrations and the computational budget allocated to the surrogate model so that they have the same contribution on the error of the Sobol index estimations.

		\section{Aknowledgments}

		The author is grateful  to his supervisor  Dr. Josselin Garnier  for his fruitful guidance and constructive suggestions.
		
\appendix

			\section{Proof of Theorem \ref{normalityasymp}}\label{ploufplouf}

			 Let us  denote by $S_{T_m}^{X^1} = \covX{\hat{z}_{T_m}(X)}{ \hat{z}_{T_m}(\tX)} /\VX{\hat{z}_{T_m}(X)}$  the   variance of the main effect of $X^1$ for  the surrogate model $\hat{z}_{T_m}(x)$ (\ref{BLUP}).
			The random variables    $S^{X^1}$ and  $S_{T_m}^{X^1}$  are defined on  the probability space  $(\Omega_Z  , \mathcal{F}_Z \ , \mathbb{P}_Z  ) $ 
			 and the random variables $S^{X^1}_{T_m,m}$, $\hat{z}_{T_m}(X)$ and  $f(X)$ are defined on the product probability space $(\Omega_Z \times \Omega_X, \sigma(\mathcal{F}_Z \times \mathcal{F}_X), \mathbb{P}_Z \otimes  \mathbb{P}_X) $.

			Let us consider the following decomposition:
			\begin{equation}
			S^{X^1}_{T_m,m}-S^{X^1}  =  S^{X^1}_{T_m,m} - S_{T_m}^{X^1} + S_{T_m}^{X^1} -S^{X^1} 
			\end{equation}
			In a first hand we deal with the convergence of $\sqrt{m}\left( S^{X^1}_{T_m,m} -  S_{T_m}^{X^1} \right)$. We handle this problem thanks to the Skorokhod's representation theorem, the  Lindeberg-Feller theorem and the Delta method. In a second hand, we study the convergence of $\sqrt{m} \Big( S_{T_m}^{X^1} - S^{X^1}  \Big)$ through the Skorokhod's representation theorem.
			
			In the forthcoming developments, we consider that $mB_{T_m}^2 \stackrel{m \rightarrow \infty}{\longrightarrow} 0$. Therefore,  there exists $ g(T_m)$ such that $  g(T_m) \stackrel{m \rightarrow \infty}{\longrightarrow} 0 $ and  $mB_{T_m}^2g^{-2}(T_m) \stackrel{m \rightarrow \infty}{\longrightarrow} 0$.  The function $g(T_m)$ considered in the remainder of this section satisfies    this property.

				\subsection{The Skorokhod's representation theorem}  Let us consider the following random  variables defined on  the probability space $(\Omega_Z, \mathcal{F}_Z, \mathbb{P}_Z)$:
				\begin{equation}\label{aTm}
				a_{T_m}(x) = (\hat{z}_{{T_m}}(x) - f(x))B_{T_m}^{-1}g(T_m)
				\end{equation}
				\begin{equation}\label{bTm}
				b_{T_m}(x) = (\hat{z}_{{T_m}}(x) - f(x))g(T_m) ^{1/3}B_{T_m}^{-1/3}
				\end{equation}
				Markov's inequality and (\ref{double_inequality}) give  us $\forall \delta > 0$:
				\begin{displaymath}
				\mathbb{P}_Z(||a_{T_m}(x)||^2_{L^2_\mu} > \delta) \leq \mathbb{E}_Z(||a_{T_m}(x)||^2_{L^2_\mu} ) /\delta \leq g(T_m)^2/\delta
				\end{displaymath}
				Therefore, we have the following convergence in probability  in $(\Omega_Z, \mathcal{F}_Z, \mathbb{P}_Z)$:
				\begin{displaymath}
				\lim_{m \rightarrow \infty}   ||a_{T_m}(x)||^2_{L^2_\mu}  = 0
				\end{displaymath}
				and the inequalities in (\ref{double_inequality})  ensure the following one:
				\begin{equation}\label{lowera}
				||a_{T_m}(x)||^2_{L^2_\mu} \geq g(T_m)^2/2
				\end{equation}
				Furthermore, the following equality stands since $f(x)$ is a Gaussian process:
				\begin{displaymath}
				\mathbb{E}_Z[(\hat{z}_{T_m}(x) - f(x))^6] = 15 \sigma_{T_m}^6(x)
				\end{displaymath}
				   Cauchy-Schwarz inequality   leads to:
				\begin{displaymath}
				\mathbb{E}_Z[||\hat{z}_{T_m}(x) - f(x)||^6_{L^6_\mu}] \leq 15   \int \sigma_{T_m}^6(x)\,d\mu(x) \leq  15   B_{T_m}^2 \sup_x k^2(x,x)
				\end{displaymath}
				Therefore, thanks to    Markov's inequality we have:
				\begin{displaymath}
				\mathbb{P}_Z(||b_{T_m}(x)||^6_{L^6_\mu} > \delta) \leq 15  g(T_m)^2 \sup_x k^2(x,x)/\delta
				\end{displaymath}
				and the following convergence stands in probability   in $(\Omega_Z, \mathcal{F}_Z, \mathbb{P}_Z)$:
				\begin{displaymath}
				\lim_{m \rightarrow \infty}   ||b_{T_m}(x)||^6_{L^6_\mu}  = 0
				\end{displaymath}
				Therefore,  we have the following convergences in probability in $(\Omega_Z, \mathcal{F}_Z, \mathbb{P}_Z)$ when ${m} \rightarrow \infty$:
				\begin{displaymath}
				\left\{
				\begin{array}{l}
				f(x) \\
				a_{T_m}(x) = (\hat{z}_{T_m}(x) - f(x))g(T_m) B_{T_m}^{-1} \\
				b_{T_m}(x) = (\hat{z}_{T_m}(x) - f(x))g(T_m)^{1/3} B_{T_m}^{-1/3} \\
				\end{array}
				\right.
				\underset{{m} \rightarrow \infty}{\overset{L^6_\mu \times L^2_\mu \times L^6_\mu}{\longrightarrow}}
				\begin{pmatrix}
				f(x) \\
				0 \\
				0 \\
				\end{pmatrix}
				\end{displaymath}

				As $L^6_\mu \times L^2_\mu \times L^6_\mu$ is separable   we can use the Skorokhod's representation theorem \cite{Bil99} presented below.
				\begin{theorem}[Skorokhod's representation theorem]\label{skotheo}
				Let $\mu_n$, $n\in \mathbb{N}$ be a sequence of probability measures on a topological space $S$; suppose that $\mu_n$ converges weakly to some probability measure $\mu$ on $S$ as $n \rightarrow \infty$. Suppose also that the support of $\mu$ is separable. Then there exist random variables $X_n$ and $X$ defined on a common probability space $(\Omega, \mathcal{F}, \mathbb{P})$ such that:
				\begin{itemize}
				\item[(i)]  $\mu_n$ is the distribution of $X_n$
				\item[(ii)] $\mu$ is the distribution of $X$
				\item[(iii)] $X_n(\omega) \rightarrow X(\omega)$ as $n\rightarrow \infty$ for every $\omega \in \Omega$.
				\end{itemize}
				\end{theorem}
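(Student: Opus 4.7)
The plan is to follow the classical Billingsley-style argument: take $(\Omega,\mathcal{F},\mathbb{P}) = ([0,1], \mathcal{B}([0,1]), \lambda)$ with $\lambda$ the Lebesgue measure, and build $X$ and $X_n$ as explicit step functions on $[0,1]$ using nested partitions of $\mathrm{supp}(\mu)$ (implicitly using a compatible metric on this separable support so that diameters and Cauchy sequences make sense, as is the usual setting of this theorem).

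First I would construct, for each level $k \in \mathbb{N}$, a countable Borel partition $\{B_{k,j}\}_{j \geq 0}$ of a fixed open neighbourhood of $\mathrm{supp}(\mu)$ satisfying: (a) $\mathrm{diam}(B_{k,j}) < 1/k$ for $j \geq 1$; (b) $\mu(\partial B_{k,j}) = 0$ for every $j$; (c) the level-$(k+1)$ partition refines the level-$k$ one. Index $j=0$ will be a ``garbage'' cell absorbing any leftover mass outside $\mathrm{supp}(\mu)$. For (b) I exploit that for any point $x$ and all but countably many radii $r>0$ the sphere of radius $r$ around $x$ is $\mu$-null, so I can cover $\mathrm{supp}(\mu)$ with small null-boundary balls around a countable dense set and then disjointify them by taking set differences; (c) is enforced by intersecting consecutive partitions recursively.

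Second, by the portmanteau theorem applied to each $B_{k,j}$ (whose boundary is $\mu$-null), $\mu_n(B_{k,j}) \to \mu(B_{k,j})$ for every fixed $k,j$ as $n \to \infty$. Picking a representative $x_{k,j} \in B_{k,j}$, I lay down consecutive sub-intervals $I_{k,j} \subset [0,1]$ of length $\mu(B_{k,j})$ and, for each $n$, sub-intervals $I_{k,j}^n$ of length $\mu_n(B_{k,j})$, indexed so that each level-$(k+1)$ child interval sits inside its level-$k$ parent in an order fixed once and for all matching the set-theoretic refinement. Setting $X^k(\omega) = x_{k,j}$ on $I_{k,j}$ and $X_n^k(\omega) = x_{k,j}$ on $I_{k,j}^n$, property (a) makes $(X^k(\omega))_k$ and $(X_n^k(\omega))_k$ Cauchy in the metric of $S$, hence convergent to limits $X(\omega)$ and $X_n(\omega)$. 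By construction the law of $X^k$ is $\sum_j \mu(B_{k,j}) \delta_{x_{k,j}}$, which weakly converges to $\mu$ as $k \to \infty$, forcing $X \sim \mu$; the same argument with $\mu_n$ in place of $\mu$ yields $X_n \sim \mu_n$.

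Third, for the pointwise convergence I fix $\omega$ outside the countable (hence $\lambda$-null) set of endpoints of all the intervals $I_{k,j}$. Given $\epsilon > 0$, I pick $k$ with $2/k < \epsilon$ and let $j^*$ be the index such that $\omega$ lies strictly inside $I_{k,j^*}$. Since the endpoints of $I_{k,j}^n$ are cumulative sums of $\mu_n(B_{k,\cdot})$ and converge to the cumulative sums of $\mu(B_{k,\cdot})$, for $n$ large $\omega$ also lies strictly inside $I_{k,j^*}^n$; then $X_n^k(\omega) = X^k(\omega) = x_{k,j^*}$ and the triangle inequality together with (a) gives $d(X_n(\omega), X(\omega)) < 2/k < \epsilon$. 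Finally, redefining $X$ and $X_n$ to equal a single fixed reference point on the excluded null set upgrades almost-sure convergence to everywhere convergence without changing any distribution.

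The hardest part will be the simultaneous bookkeeping across the two indices $k$ (nested refinements) and $n$ (perturbed masses): the children of each parent cell must be indexed in a fixed order, independent of $n$, so that the intervals $I_{k,j}$ and $I_{k,j}^n$ really do nest consistently and the convergence of endpoints used in Step 3 is legitimate. A secondary but necessary concern is the garbage cell $B_{k,0}$, whose $\mu_n$-mass $1 - \sum_{j \geq 1}\mu_n(B_{k,j})$ must vanish as $n \to \infty$ at each fixed $k$; this follows, on a Polish space, from tightness of the weakly convergent sequence $\{\mu_n\}$, so that almost every $\omega$ ultimately escapes the garbage interval and the argument is not derailed.
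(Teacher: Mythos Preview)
The paper does not prove this theorem; it is quoted as a classical result with a reference to Billingsley and then used as a black box inside the proof of Theorem~\ref{normalityasymp}. So there is no ``paper's own proof'' to compare against: your sketch \emph{is} essentially the argument the cited reference contains, and in that sense it matches what the paper invokes.

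Two technical remarks on your sketch itself. First, the statement as written in the paper says ``topological space $S$'', but your construction needs a metric (diameters, Cauchy sequences) and in fact completeness so that the Cauchy sequences $(X^k(\omega))_k$ actually have limits in $S$; you flag the metric issue parenthetically but not completeness. This is harmless in the paper's application, where $S = L^6_\mu \times L^2_\mu \times L^6_\mu$ is a separable Banach space, but it is worth saying explicitly. Second, the nesting of the perturbed intervals $I^n_{k+1,j'} \subset I^n_{k,j}$ that you rely on in Step~3 holds because the level-$(k+1)$ cells genuinely partition each level-$k$ cell, so $\sum_{j'} \mu_n(B_{k+1,j'}) = \mu_n(B_{k,j})$ exactly for every $n$; you allude to this as ``bookkeeping'' but it is the one place where a careless construction would break, so it deserves a line of justification rather than a warning.
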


				Therefore,  there is a probability space denoted by  $(\tilde{\Omega}_Z, \tilde{\mathcal{F}}_Z, \tilde{\mathbb{P}}_Z)$ such that  					\begin{equation} (\tilde{f}_{T_m}(x), \tilde{a}_{T_m}(x), \tilde{b}_{T_m}(x)) \stackrel{\mathcal{L}}{=}  ({f}(x), {a}_{T_m}(x), {b}_{T_m}(x)), \quad \forall {m}
				\end{equation}
 				 with  $(\tilde{f}_{T_m}(x), \tilde{a}_{T_m}(x), \tilde{b}_{T_m}(x)) $, $\tilde{f}(x)$ defined on $ (\tilde{\Omega}_Z, \tilde{\mathcal{F}}_Z, \tilde{\mathbb{P}}_Z) $ and  $ ({f}(x), {a}_{T_m}(x), {b}_{T_m}(x))  $ defined on $  ( {\Omega}_Z,  {\mathcal{F}}_Z,  {\mathbb{P}}_Z)  $  -  and $ \forall \tilde{\omega}_Z \in  \tilde{\Omega}_Z$  the following convergence holds for ${m} \rightarrow \infty$:
				\begin{equation}
				(\tilde{f}_{T_m}(x), \tilde{a}_{T_m}(x), \tilde{b}_{T_m}(x))	
\underset{m \rightarrow \infty}{\overset{L^6_\mu \times L^2_\mu \times L^6_\mu}{\longrightarrow}}
				(\tilde{f}(x), 0, 0)
				\end{equation}
				First, let us build below  the analogous  of  ${z}_{T_m}(x)$ in $(\tilde{\Omega}_Z, \tilde{\mathcal{F}}_Z, \tilde{\mathbb{P}}_Z)$.  For a fixed $T_m > 0$, we have the equality 
				$a_{T_m}(x)g(T_m)^{-1}B_{T_m} = b_{T_m}(x)g(T_m)^{-1/3}B_{T_m}^{1/3}$. 
				Therefore, we have
				\begin{displaymath}
				||a_{T_m}(x)g(T_m)^{-1}B_{T_m} -  b_{T_m}(x)g(T_m)^{-1/3}B_{T_m}^{1/3}  ||_{L^2_\mu} = 0 
				\end{displaymath}
				and
				\begin{displaymath}
				\mathbb{P}_Z \left(||a_{T_m}(x)g(T_m)^{-1}B_{T_m} -  b_{T_m}(x)g(T_m)^{-1/3}B_{T_m}^{1/3}  ||_{L^2_\mu} = 0\right) = 1 
				\end{displaymath}
				The equality $(\tilde{a}_{T_m}(x), \tilde{b}_{T_m}(x)) \stackrel{\mathcal{L}}{=} (a_{T_m}(x), b_{T_m}(x))$ $\forall {T_m}$ leads to the following one
				\begin{displaymath}
				\tilde{\mathbb{P}}_Z \left(||\tilde{a}_{T_m}(x)g(T_m)^{-1}B_{T_m} - \tilde{b}_{T_m}(x)g(T_m)^{-1/3}B_{T_m}^{1/3}  ||_{L^2_\mu} = 0\right) = 1 
				\end{displaymath}
				Thus,  for almost every $\tilde{\omega_Z} $  in  $ \tilde{\Omega}_Z$, we have
				 \begin{equation}\label{aTm}
				||\tilde{a}_{T_m}(x)g(T_m)^{-1}B_{T_m} - \tilde{b}_{T_m}(x)g(T_m)^{-1/3}B_{T_m}^{1/3}  ||_{L^2_\mu}= 0
				\end{equation}
				 If we consider such a $\tilde{\omega}_Z$ we have  the equality $\tilde{a}_{T_m}(x)g(T_m)^{-1}B_{T_m} = \tilde{b}_{T_m}(x)g(T_m)^{-1/3}B_{T_m}^{1/3}$  for  $\mu$-almost every $x$.
				
				Let us denote by 
				$$\tilde{z}_{T_m}(x) =  \tilde{f}_{T_m}(x) + g(T_m)^{-1}B_{T_m}\tilde{a}_{T_m}(x),  $$ 
				$\tilde{z}_{T_m}(x)  $  is defined on $ (\tilde{\Omega}_Z, \tilde{\mathcal{F}}_Z, \tilde{\mathbb{P}}_Z) $. For $\tilde{\omega}_Z$ such that (\ref{aTm}) holds we  have the   equality $\tilde{z}_{T_m}(x)  = \tilde{f}_{T_m}(x) + g(T_m)^{-1/3}B_{T_m}^{1/3}\tilde{b}_{T_m}(x) $  
				for  $\mu$-almost every $x$.
				
				\subsection{Convergences with  a fixed $\tilde{\omega}_Z \in \tilde{\Omega}_Z$}
				
				Let us  consider a fixed  $\tilde{\omega}_Z \in \tilde{\Omega}_Z$ such that (\ref{aTm}) holds.  We aim to    study the convergence of ${\sqrt{m}\left(  \tilde{S}_{T_m,m}^{X^1}  -   \tilde{S}_{T_m}^{X^1} \right)}$ and  ${\sqrt{m}\left(  \tilde{S}_{T_m}^{X^1}-  \tilde{S}^{X^1}   \right)}$ in $( {\Omega}_X,  {\mathcal{F}}_X,  {\mathbb{P}}_X) $ with:
				\begin{equation}
				\tilde{S}^{X^1} = \mathrm{cov}_X(\tilde{f}(X), \tilde{f}(\tX))/\mathrm{var}_X(\tilde{f}(X)), 
				\end{equation}
				\begin{equation}
				\tilde{S}_{T_m}^{X^1} = \mathrm{cov}_X(\tilde{z}_{T_m}(X), \tilde{z}_{T_m}(\tX))/\mathrm{var}_X(\tilde{z}_{T_m}(X))
				\end{equation}
				and
				\begin{equation}
				\tilde{S}_{T_m,m}^{X^1} =\frac{m^{-1}\sum_{i=1}^n{\tilde{z}_{T_m}(X_i)\tilde{z}_{T_m}(\tX_i)} - 
				m^{-2}\sum_{i,j=1}^n\tilde{z}_{T_m}(X_i)\tilde{z}_{T_m}(\tX_j)
				}{
				m^{-1}\sum_{i=1}^n\tilde{z}_{T_m}^2(X_i) - m^{-2}(\sum_{i=1}^n\tilde{z}_{T_m}(X_i))^2
				}
				\end{equation}

				\subsubsection{Convergence of ${\sqrt{m}\left( \tilde{S}_{T_m,m}^{X^1}  -  \tilde{S}_{T_m}^{X^1} \right)}$  in $ {( {\Omega}_X,  {\mathcal{F}}_X,  {\mathbb{P}}_X)} $}

				Let us denote by $Y_{T_m,i} = \tilde{z}_{T_m}(X_i)$, $Y_{T_m,i}^{X^1} = \tilde{z}_{T_m}(\tX_i)$ and 
				\begin{equation}
				\begin{array}{ll}
				U_{T_m,i}  =  &  \left( (Y_{T_m,i}-\mathbb{E}_X[Y_{T_m,i}])(Y_{T_m,i}^{X^1}-\mathbb{E}_X[Y_{T_m,i}]),\right. \\
				 &\left. Y_{T_m,i}-\mathbb{E}_X[Y_{T_m,i}], Y_{T_m,i}^{X^1}-\mathbb{E}_X[Y_{T_m,i}], (Y_{T_m,i}-\mathbb{E}_X[Y_{T_m,i}])^2 \right)
				\end{array}
				\end{equation}
				Since $\tilde{\omega}_Z \in \tilde{\Omega}_Z$ is fixed,  $Y_{T_m,i}$, $Y_{T_m,i}^{X^1}$ and $U_{T_m,i} $ are defined on the probability space $( {\Omega}_X,  {\mathcal{F}}_X,  {\mathbb{P}}_X) $.
				For each $m$,  $(U_{T_m,i}/ \sqrt{m})_{i=1,\dots,m}$ is a sequence of independent random vectors such that for any  $ \varepsilon >0$:
				\begin{eqnarray*}
				\sum_{i=1}^m\mathbb{E}_X\left[||U_{T_m,i}||^2/m \mathbf{1}_{\{ ||U_{T_m,i}|| > \varepsilon \sqrt{m} \}}\right] & =   &
				\mathbb{E}_X\left[||U_{T_m,1}||^2  \mathbf{1}_{\{ ||U_{T_m,1}|| > \varepsilon \sqrt{m} \}}\right]
				 \\
				& \leq & 
				\mathbb{E}_X\left[||U_{T_m,1}||^{3}  \right]/( \varepsilon \sqrt{m}) 
				\end{eqnarray*} 
				since  $||U_{T_m,1}|| > \varepsilon \sqrt{m}$. \\
				
				We aim below to find an upper bound for  $\sup_{T_m}  \mathbb{E}_X\left[||U_{T_m,i}||^{3}  \right] $. 
				First, for any $m$,  let us consider the component $(Y_{T_m,i}-\mathbb{E}_X[Y_{T_m}])(Y_{T_m,i}^{X^1}-\mathbb{E}_X[Y_{T_m}]) $. We have the following inequality:
				\begin{displaymath}
				\mathbb{E}_X\left[ | (Y_{T_m,i}-\mathbb{E}[Y_{T_m,i}])(Y_{T_m,i}^{X^1}-\mathbb{E}[Y_{T_m,i}])|^{3} \right]  \leq C \mathbb{E}_X\left[ |  Y_{T_m,i} |^{6 } \right]
				\end{displaymath}
				with $C >0$ a constant.
				  Minkowski inequality and the equality   $\tilde{z}_{T_m}(x) =  (\tilde{f}_{T_m}(x) + g(T_m)^{-1/3}B_{T_m}^{1/3} \tilde{b}_{T_m}(x))$ for $\mu$-almost every $x$  give that  there exists $ C, C' > 0$ such that:
				\begin{eqnarray*}
				\mathbb{E}_X\left[ |  Y_{T_m,i} |^{6 } \right]& \leq &  
				 C || \tilde{f}_{T_m}(x) ||_{L^6_\mu}^{6 } + C' B_{T_m}^{2}g(T_m)^{-2}||   \tilde{b}_{T_m}(x) ||_{L^6_\mu}^{6 }
				\end{eqnarray*}
				The convergence  $(\tilde{f}_{T_m}(x), \tilde{b}_{T_m}(x)) \underset{{m} \rightarrow \infty}{\overset{L^6_\mu \times  L^6_\mu}{\longrightarrow}} (\tilde{f}(x), 0)$ implies  that   there exists $ C > 0$ such that  for any $m$:
				\begin{equation}\label{ineq1}
				\mathbb{E}_X\left[ | (Y_{T_m,i}-\mathbb{E}_X[Y_{T_m,i}])(Y_{T_m,i}^{X^1}-\mathbb{E}_X[Y_{T_m,i}])|^{3} \right]   \leq    C \\
				\end{equation}
				Second, following the same guideline, we find that  there exists $ C, C', C'' > 0$ such that for any $m$:
				\begin{equation}\label{ineq2}
				\mathbb{E}_X\left[ | (Y_{T_m,i}-\mathbb{E}_X[Y_{T_m,i}])^2|^{3} \right]  \leq    C  
				\end{equation}
				\begin{equation}\label{ineq3}
				\mathbb{E}_X\left[ | Y_{T_m,i}-\mathbb{E}_X[Y_{T_m,i}]|^{3} \right]  \leq    C'  
				\end{equation}
				\begin{equation}\label{ineq4}
				\mathbb{E}_X\left[ | Y_{T_m,i}^{X^1}-\mathbb{E}_X[Y_{T_m,i}]|^{3} \right]  \leq    C' 
				\end{equation}
				Third, the inequalities (\ref{ineq1}), (\ref{ineq3}),  (\ref{ineq3}) and  (\ref{ineq4})   give that $\sup_{T_m}  \mathbb{E}_X\left[||U_{T_m}||^{3}  \right]  < \infty$. 

				The inequality 
				$\sum_{i=1}^m\mathbb{E}_X\left[||U_{T_m,i}||^2/m \mathbf{1}_{\{ ||U_{T_m,i}|| > \varepsilon \sqrt{m} \}}\right] \leq 
				\mathbb{E}_X\left[||U_{T_m,1}||^{3}  \right]/( \varepsilon \sqrt{m})$ and the uniform  boundedness of $ \mathbb{E}_X\left[||U_{T_m}||^{3}  \right] $ lead to  the following convergence $\forall \varepsilon > 0$ when $m \rightarrow \infty$:
				\begin{equation}\label{LindevergFellerCond}
				\sum_{i=1}^m\mathbb{E}_X\left[||U_{T_m,i}||^2/m \mathbf{1}_{\{ ||U_{T_m,i}|| > \varepsilon \sqrt{m} \}}\right]  
				=
				\mathbb{E}_X\left[||U_{T_m,i}||^2  \mathbf{1}_{\{ ||U_{T_m,i}|| > \varepsilon \sqrt{m} \}}\right]  
				 \stackrel{m \rightarrow \infty }{\longrightarrow}  0 
				\end{equation} 
				and thus  $||U_{T_m,i}||^2$ is uniformly integrable. \\
				
				Now, we aim to show the convergence in probability of $U_{T_m,i} \stackrel{m \rightarrow \infty}{\longrightarrow} U_{i}$ in  $(\Omega_X, {\mathcal{F}}_X, {\mathbb{P}}_X)$. Let us denote by  
				\begin{displaymath}
				U_{i} = \left( (Y_{i}-\mathbb{E}_X[Y_{i}])(Y_{i}^{X^1}-\mathbb{E}_X[Y_{i}]),Y_{i}-\mathbb{E}_X[Y_{i}], Y_{i}^{X^1}-\mathbb{E}_X[Y_{i}], (Y_{i}-\mathbb{E}_X[Y_{i}])^2 \right)
				\end{displaymath}
				with $Y_i  = \tilde{f}(X_i )$ and $Y_i^{X^1}  = \tilde{f}(\tX_i )$. The random variables $U_{i} $, $Y_i $ and $Y_i^{X^1} $ are defined on $(\Omega_X, \mathcal{F}_X, \mathbb{P}_X)$ since $\tilde{\omega}_Z \in \tilde{\Omega}_Z$ is fixed.

				First, we study the term $\mathbb{E}_X\left[\left|U^{(1)}_{T_m,i} - U^{(1)}_i\right|\right]$ where 
				$
				 U^{(1)}_i = (Y_i - \mathbb{E}_X[Y_i])(Y_i^{X^1} - \mathbb{E}_X[Y_i])
				$ 
				and
				$
				 U^{(1)}_{T_m,i} =(Y_{T_m,i}-\mathbb{E}_X[Y_{T_m,i}])(Y_{T_m,i}^{X^1}-\mathbb{E}_X[Y_{T_m,i}])
				$.
				We have the following equality:
				\begin{eqnarray*}
				\mathbb{E}_X\left[\left|U^{(1)}_{T_m,i} - U^{(1)}_i\right|\right] & = & \mathbb{E}_X\left[\left|
				 \left(Y_{T_m,i}-\mathbb{E}_X[Y_{T_m,i}]\right)\left((Y_{T_m,i}^{X^1}-\mathbb{E}_X[Y_{T_m,i}])
				- (Y_{ i}^{X^1}-\mathbb{E}_X[Y_{ i}])
				 \right) \right.\right.  \\
				 & + & \left.\left.
				 (Y_{ i}^{X^1}-\mathbb{E}_X[Y_{ i}])\Big((Y_{T_m,i} -\mathbb{E}_X[Y_{T_m,i}])
				- (Y_{ i} -\mathbb{E}_X[Y_{ i}])
				 \Big) \right|\right] 
				\end{eqnarray*}
				from which we deduce the inequality:
				\begin{eqnarray*}
				\mathbb{E}_X\left[\left|U^{(1)}_{T_m,i} - U^{(1)}_i\right|\right] & \leq & \mathbb{E}_X\left[\left|
				 \left(Y_{T_m,i}-\mathbb{E}_X[Y_{T_m,i}]\right)\left((Y_{T_m,i}^{X^1}-\mathbb{E}_X[Y_{T_m,i}])
				- (Y_{ i}^{X^1}-\mathbb{E}_X[Y_{ i}])
				 \right) \right|\right]  \\
				 & + & \mathbb{E}_X\left[\left|
				 (Y_{ i}^{X^1}-\mathbb{E}_X[Y_{ i}])\Big((Y_{T_m,i} -\mathbb{E}_X[Y_{T_m,i}])
				- (Y_{ i} -\mathbb{E}_X[Y_{ i}])
				 \Big) \right|\right] 
				\end{eqnarray*}
				and from   Cauchy-Schwarz inequality   there exists $ C, C', C'' > 0$ such that:
				\begin{eqnarray*}
				\mathbb{E}_X\left[\left|U^{(1)}_{T_m,i} - U^{(1)}_i\right|\right] & \leq & C\mathbb{E}_X\left[ 
				 \left(Y_{T_m,i}-\mathbb{E}_X[Y_{T_m,i}]\right)^2\right]^{1/2} \mathbb{E}_X\left[(Y_{T_m,i}^{X^1} 
				- Y_{i}^{X^1})^2
				 \right] ^{1/2} \\
				 & + & C'  \mathbb{E}_X\left[ 
				 (Y_{i}^{X^1}-\mathbb{E}_X[Y_{i}])^2\right]^{1/2} \mathbb{E}_X\left[ (Y_{T_m,i} 
				- Y_{i} )^2  \right] ^{1/2} \\
				& \leq &  C''
				 \mathbb{E}_X\left[ (Y_{T_m,i} 
				- Y_{i} )^2  \right] ^{1/2}
				\left(
				\mathbb{E}_X\left[ 
				 (Y_{i}^{X^1})^2\right]^{1/2}  + \mathbb{E}_X\left[ 
				 \left(Y_{T_m,i}\right)^2\right]^{1/2}
				\right)
				\end{eqnarray*}
				The equality  $Y_{T_m,i} - Y_{i} = g(T_m)^{-1}B_{T_m}\tilde{a}_{T_m}(X_i )$ for $\mathbb{P}_X$-almost every $\omega_X \in \Omega_X$ implies  that   $\mathbb{E}_X\left[ (Y_{T_m,i} - Y_{i} )^2  \right] ^{1/2} = g(T_m)^{-1}B_{T_m}  \mathbb{E}_X\left[ (\tilde{a}_{T_m}(X_i))^2  \right]^{1/2}$.
				Since $\tilde{a}_{T_m}(x) \stackrel{m \rightarrow \infty }{\longrightarrow} 0$ in $L^2_\mu$, we have the convergence  $\mathbb{E}_X\left[ (Y_{T_m,i} - Y_{i} )^2  \right] ^{1/2} \stackrel{m \rightarrow \infty }{\longrightarrow} 0$. 
				
				Furthermore,  there exists $ C, C' > 0$ such that $\mathbb{E}_X\left[ 
				 (Y_{i}^{X^1})^2\right]^{1/2} < C$ and $ \mathbb{E}_X\left[ 
				 \left(Y_{T_m,i}\right)^2\right]^{1/2} < C'$ since $\tilde{z}_{T_m}(x) = \tilde{f}_{T_m}(x)+ g(T_m)^{-1}B_{T_m}\tilde{a}_{T_m}(x)$, $\tilde{f}_{T_m}(x) \stackrel{m \rightarrow \infty }{\longrightarrow} \tilde{f}(x)$ in $L^6_\mu$ and $\tilde{a}_{T_m}(x)  \stackrel{m \rightarrow \infty }{\longrightarrow}  0$ in $L^2_\mu$.
				Therefore, we have the following convergence:
				\begin{equation}\label{convU1}
				\mathbb{E}_X\left[\left|U^{(1)}_{T_m,i} - U^{(1)}_i\right|\right] \stackrel{m \rightarrow \infty}{\longrightarrow} 0
				\end{equation}
				Then, if we consider the terms
				$
				 U^{(4)}_i = (Y_i - \mathbb{E}_X[Y_i])^2
				$
				and
				$
				 U^{(4)}_{T_m,i} =(Y_{T_m,i}-\mathbb{E}_X[Y_{T_m,i}])^2
				$.
				Following the same guideline we find the convergence:
				\begin{equation}\label{convU4}
				\mathbb{E}_X\left[\left|U^{(4)}_{T_m,i} - U^{(4)}_i\right|\right] \stackrel{m \rightarrow \infty}{\longrightarrow} 0
				\end{equation}
				Furthermore, denoting by 
				$
				 U^{(2)}_i = (Y_i - \mathbb{E}_X[Y_i])
				$, 
				$
				 U^{(2)}_{T_m,i} =(Y_{T_m,i}-\mathbb{E}_X[Y_{T_m,i}])
				$, 
				$
				 U^{(3)}_i = (Y_i^{X^1} - \mathbb{E}_X[Y_i])
				$ and 
				$
				 U^{(3)}_{T_m,i} =(Y_{T_m,i}^{X^1}-\mathbb{E}_X[Y_{T_m,i}])
				$, 
				we have  the following inequalities:
				\begin{displaymath}
				\mathbb{E}_X\left[\left|U^{(2)}_{T_m,i} - U^{(2)}_i\right|\right]   \leq C\mathbb{E}_X\left[(Y_{T_m,i} - Y_{i})^2\right] ^{1/2}
				\end{displaymath}
				\begin{displaymath}
				\mathbb{E}_X\left[\left|U^{(3)}_{T_m,i} - U^{(3)}_i\right|\right]   \leq C'\mathbb{E}_X\left[(Y_{T_m,i}^{X^1} - Y_{i}^{X^1})^2\right] ^{1/2}
				\end{displaymath}
				with $C, C'$ positive constants. The convergences $\tilde{f}_{T_m}(x) \stackrel{L^6_\mu}{\rightarrow} \tilde{f}(x)$ and $\tilde{a}_{T_m}(x) \stackrel{L^6_\mu}{\rightarrow} 0$ when $ m \rightarrow \infty$ ensure that: 
				 \begin{equation}\label{convU2}
				\mathbb{E}_X\left[\left|U^{(2)}_{T_m,i} - U^{(2)}_i\right|\right] \stackrel{m \rightarrow \infty}{\longrightarrow} 0
				\end{equation}
				and
				\begin{equation}\label{convU3}
				\mathbb{E}_X\left[\left|U^{(3)}_{T_m,i} - U^{(3)}_i\right|\right] \stackrel{m \rightarrow \infty}{\longrightarrow} 0
				\end{equation}
				Finally,  the convergences presented in (\ref{convU1}), (\ref{convU4}), (\ref{convU2}) and (\ref{convU3}) imply  the desired one:
				\begin{equation}\label{convU}
				\mathbb{E}_X\left[ ||U_{T_m,i} - U_i ||\right] \stackrel{m \rightarrow \infty}{\longrightarrow} 0
				\end{equation}
				   Markov's inequality gives $\forall \delta >0$: 
				\begin{equation}\label{inequalityU}
				\mathbb{P}_X\left(||U_{T_m,i} - U_i||\geq \delta \right) \leq \mathbb{E}_X\left[||U_{T_m,i} - U_i||\right] /\delta
				\end{equation}
				The equations (\ref{convU})  and  (\ref{inequalityU}) imply the convergence $U_{T_m,i} \stackrel{m \rightarrow \infty}{\longrightarrow}  U_{i}$ in  probability in $(\Omega_X, \mathcal{F}_X, \mathbb{P}_X)$.
				
				 This  convergence in probability and the  uniform  integrability of $ ||U_{T_m,i} ||^2$  implies that $U_{T_m,i} \stackrel{m \rightarrow \infty}{\longrightarrow}  U_{i}$  in  ${L}^2(\Omega_X)$ and thus $\mathrm{cov}_X(U_{T_m,i}) \stackrel{m \rightarrow \infty}{\longrightarrow}  \mathrm{cov}_X(U_{i}) = \SSigma$.
				We note that we have also the convergence $\mathbb{E}_X[U_{T_m,i}]\rightarrow \mathbb{E}_X[U_{i}] =  \mmu$  since the convergence in $L^2(\Omega_X)$ implies the one in $L^1(\Omega_X)$.
				
				The  condition (\ref{LindevergFellerCond}) and the convergence $\sum_{i=1}^m\mathrm{cov}_X(U_{T_m,i})/m = \mathrm{cov}_X(U_{T_m,i}) \stackrel{m \rightarrow \infty}{\longrightarrow} \SSigma$ allow for using the Lindeberg-Feller  Theorem (see \cite{vaart98})  which ensures the following convergence  in $(\Omega_X, \mathcal{F}_X, \mathbb{P}_X)$:
				\begin{eqnarray*}
				\sum_{i=1}^m (U_{T_m,i}/ \sqrt{m} - \mathbb{E}_X[U_{T_m,i}/ \sqrt{m}]) & = &\sqrt{m} \left(\sum_{i=1}^m (U_{T_m,i})/m - \mathbb{E}_X[U_{T_m,i}] \right)  \\
				&\underset{m \rightarrow \infty}{\overset{\mathcal{L}}{\longrightarrow}}& \mathcal{N}\left(0,\SSigma\right)
				\end{eqnarray*}
				Furthermore, we have the following equality:
				\begin{displaymath}
				\tilde{S}_{T_m,m}^{X^1}  = \Phi(\bar{U}_{T_m})
				\end{displaymath}
				where $\bar{U}_{T_m} = \sum_{i=1}^m  U_{T_m,i}/m$ and $\Phi(x,y,z,t) = (x-yz)/(t-y^2)$. Therefore, the Delta method gives that  in $(\Omega_X, \mathcal{F}_X, \mathbb{P}_X)$:
				\begin{equation}\label{conv1}
				\sqrt{m} \left( \tilde{S}_{T_m,m}^{X^1} - \tilde{S}_{T_m}^{X^1}  \right)   \underset{m \rightarrow \infty}{\overset{\mathcal{L}}{\longrightarrow}} \mathcal{N}\left(0, \nabla \Phi^T(\mmu) \SSigma   \nabla \Phi(\mmu)\right)
				\end{equation}
				where $\mmu =  \mathbb{E}_X[U_i] =  \left( \mathrm{cov}_X(Y_i,Y_i^{X^1}), 0, 0, \mathrm{var}_X(Y_i)  \right)$. We note that the assumption $ \mathrm{var}_X(Y_i)  \neq 0$ justifies the use of the Delta method. A simple calculation gives that:
				\begin{equation}\label{AsymptoVariance}
				\nabla \Phi^T(\mmu) \SSigma   \nabla \Phi(\mmu) = \frac{\mathrm{var}_X\left(
				(Y_i-\mathbb{E}_X[Y_i])
				\left(
				Y_i^{X^1}-\mathbb{E}_X[Y_i ] - \tilde{S}^{X^1}Y_i + \tilde{S}^{X^1}\mathbb{E}_X[Y_i]
				\right)
				 \right)}{(\mathrm{var}_X(Y_i ))^2}
				\end{equation}
				with $\tilde{S}^{X^1} = \mathrm{cov}_X(Y_i,Y_i ^{X^1})/\mathrm{var}_X(Y_i )  = \mathrm{var}_X(\mathbb{E}_X[Y_i |X^1])/\mathrm{var}_X(Y_ i) $.
				
				\subsubsection{Convergence of ${\sqrt{m}\left(  \tilde{S}_{T_m}^{X^1}-  \tilde{S}^{X^1} \right)}$   in ${( {\Omega}_X,  {\mathcal{F}}_X,  {\mathbb{P}}_X)} $}
				
				Analogously to  \cite{Jan12}, we have the equality:
				\begin{eqnarray*}
				\tilde{S}_{T_m}^{X^1}-\tilde{S}^{X^1} & = &  \frac{\mathrm{var}_X(\tilde{\delta}_{T_m,i} )^{1/2}C_{\tilde{\delta}_{T_m,i}}}{\mathrm{var}_X(Y_i)+2\mathrm{cov}_X(Y_i,\tilde{\delta}_{T_m,i})+\mathrm{var}_X(\tilde{\delta}_{T_m,i})} 
				\end{eqnarray*}
				where $\tilde{\delta}_{T_m}(x) =  g(T_m)^{-1}B_{T_m} \tilde{a}_{T_m}(x)$,
				\begin{equation}\label{constCdT}
				\begin{array}{lll}
				C_{\tilde{\delta}_{T_m,i}} & =  & 2\mathrm{var}_X(Y_i)^{1/2}(\mathrm{cor}_X(Y_i,\tilde{\delta}_{T_m,i})-\mathrm{cor}_X(Y_i,Y_i^{X^1})\mathrm{cor}_X(Y_i,\tilde{\delta}_{T_m,i}))\\
				 & & +\mathrm{var}_X(\tilde{\delta}_{T_m,i})^{1/2}(\mathrm{cor}_X(\tilde{\delta}_{T_m,i},\tilde{\delta}_{T_m,i}^{X^1})-\mathrm{cor}_X(Y_i,Y_i^{X^1}))
				\end{array}
				\end{equation}
				  $\tilde{\delta}_{T_m,i} = \tilde{\delta}_{T_m,i}(X_i )$ and $\tilde{\delta}_{T_m,i}^{X^1} =\tilde{\delta}_{T_m,i}(\tX_i)$.
				The random variables  $\tilde{\delta}_{T_m,i}$ and  $\tilde{\delta}_{T_m,i}^{X^1}$  are defined on the product space $(\tilde{\Omega}_Z \times \Omega_X, \sigma( \tilde{\mathcal{F}}_Z \times  \mathcal{F}_X), \tilde{\mathbb{P}}_Z \otimes  \mathbb{P}_X) $  and $ \tilde{S}^{X^1}   $,  $\tilde{\delta}_{T_m}(x)$ and $C_{\tilde{\delta}_{T_m,i}} $ are defined on $ (\tilde{\Omega}_Z, \tilde{\mathcal{F}}_Z, \tilde{\mathbb{P}}_Z)$. We still consider a fixed $\tilde{\omega}_Z \in \tilde{\Omega}_Z$ such that (\ref{aTm}) holds.
				The  assumption $ \mathrm{var}_X(Y_i)  \neq 0$ ensures that the denominator is not equal to  zero and the convergences $\tilde{f}_{T_m}(x) \stackrel{L^6_\mu}{\rightarrow} \tilde{f}(x)$ and $\tilde{a}_{T_m}(x) \stackrel{L^2_\mu}{\rightarrow} 0$ give that $\sup_{ m} C_{\tilde{\delta}_{T_m,i}} < \infty$.
				Furthermore, since $\tilde{a}_{T_m}(x) \stackrel{L^2_\mu}{\rightarrow} 0$ we have the following inequalities:
				\begin{displaymath}
				\mathrm{var}_X(\tilde{\delta}_{T_m,i} )  \leq  C \mathbb{E}_X[(B_{T_m}g(T_m)^{-1}\tilde{a}_{T_m}(X_i))^2]  \leq C'  g(T_m)^{-2}B_{T_m}^2
				\end{displaymath}
				with $C, C'$ positive constants.

				Thanks to   Slutsky's theorem, the   convergence  $m g(T_m)^{-2}B_{T_m}^2 \stackrel{m}{\longrightarrow} 0$  ensures the following asymptotic normality when $m \rightarrow \infty$ in $(\Omega_X, \mathcal{F}_X, \mathbb{P}_X)$:
				\begin{equation}\label{AsymNorm}
				\sqrt{m}  \left( \tilde{S}_{T_m,m}^{X^1}-\tilde{S}^{X^1} \right)  \underset{m \rightarrow \infty}{\overset{\mathcal{L}}{\longrightarrow}} \mathcal{N}\left(0, \nabla \Phi^T(\mmu) \SSigma   \nabla \Phi(\mmu)\right)
				\end{equation}
				
				\subsubsection{The case  $ { m B_{T_m}^2 \stackrel{m \rightarrow \infty}{\longrightarrow} \infty }$.}
				
				Let us suppose that $m B_{T_m}^2 \stackrel{m \rightarrow \infty}{\longrightarrow} \infty $. We consider   the convergences of 
				\begin{equation}\label{convBTinf1}
				{{ B_{T_m}^{-1}}\left( \tilde{S}_{T_m,m}^{X^1}  -  \tilde{S}_{T_m}^{X^1} \right)}
				\end{equation}
				and
				\begin{displaymath}
				{{B_{T_m}^{-1}}\left(  \tilde{S}_{T_m}^{X^1}-  \tilde{S}^{X^1}    \right)}
				\end{displaymath}
				  in $(\Omega_X, \mathcal{F}_X, \mathbb{P}_X)$ with a fixed $\tilde{\omega}_Z \in \tilde{\Omega}_Z$ such that (\ref{aTm}) holds.
				We have the following equality:
				\begin{displaymath} 
				{{ B_{T_m}^{-1}}\left( \tilde{S}_{T_m,m}^{X^1}  -  \tilde{S}_{T_m}^{X^1} \right)}
				=
				{{ (\sqrt{m}B_{T_m})^{-1}}\sqrt{m} \left( \tilde{S}_{T_m,m}^{X^1}  -  \tilde{S}_{T_m}^{X^1} \right)}
				\end{displaymath}
				The convergence $(\sqrt{m}B_{T_m})^{-1}  \stackrel{m \rightarrow \infty}{\longrightarrow} 0$ and the convergence in (\ref{conv1}) (which does not  depend  on the convergence of the ratio  between $B_{T_m}^{-2}$ and $\sqrt{m}$)  imply the following one:
				\begin{displaymath} 
				{{ B_{T_m}^{-1}}\left( \tilde{S}_{T_m,m}^{X^1}  -  \tilde{S}_{T_m}^{X^1} \right)}
				\stackrel{m \rightarrow \infty}{\longrightarrow} 0
				\end{displaymath}
				Finally, thanks to the  inequality (\ref{lowera}),   there exists $ C, C' > 0$ such that
				\begin{eqnarray*}
				{{B_{T_m}^{-1}}\left(  \tilde{S}_{T_m}^{X^1}-  \tilde{S}^{X^1}    \right)}  
				  & = & {B_{T_m}^{-1}} \frac{g(T_m)^{-1}B_{T_m}\mathrm{var}_X(\tilde{a}_{T_m}(X_i) )^{1/2}C_{\tilde{\delta}_{T_m,i}}}{\mathrm{var}_X(Y_i)+2\mathrm{cov}_X(Y_i,\tilde{\delta}_{T_m,i})+\mathrm{var}_X(\tilde{\delta}_{T_m,i})} \\
				 & \geq  & C g(T_m)^{-1}  \frac{g(T_m)  C_{\tilde{\delta}_{T_m,i}} }{\mathrm{var}_X(Y_i)+2\mathrm{cov}_X(Y_i,\tilde{\delta}_{T_m,i})+\mathrm{var}_X(\tilde{\delta}_{T_m,i})}\\
				 & \geq  & C'C_{\tilde{\delta}_{T_m,i}}
				\end{eqnarray*}
				Therefore, if we have $C_{\tilde{\delta}_{T_m,i}} > 0$, the asymptotic normality is not reached and the estimator is biased.
				Regarding the expression of $C_{\tilde{\delta}_{T_m,i}}$ in (\ref{constCdT}) and assuming that $\mathrm{var}_X(Y_i) \neq  0$, $C_{\tilde{\delta}_{T_m,i}} = 0$ could happen if:
				\begin{itemize}
				\item $\mathrm{cor}_X(Y_i,Y_i^{X^1}) = 1$, i.e. all the variability of  $\tilde{f}(x)$  is explained by the variable $X^1$.
				\item  $\mathrm{var}_X(\tilde{\delta}_{T_m,i}) = 0$, i.e. the surrogate model error is null.
				\end{itemize}
				
				\subsection{Convergence in the  probability space ${( {\Omega}_Z \times \Omega_X ,  \sigma({\mathcal{F}}_Z \times \mathcal{F}_X),  {\mathbb{P}}_Z \otimes \mathbb{P}_X)}$.}
				
				We have proved   that for almost every $\tilde{\omega}_Z \in \tilde{\Omega}_Z$:
				\begin{itemize}
				\item[] If $m B_{T_m}^2 \stackrel{m \rightarrow \infty}{\longrightarrow} 0$, then
				\begin{displaymath}
				\forall I \in \R, \,\, \mathbb{P}_X\left( \sqrt{m}\left( \tilde{S}_{T_m,m}^{X^1}  -   \tilde{S}^{X^1}  \right) \in I \right) \stackrel{m  \rightarrow \infty}{\longrightarrow}  \int_I \tilde g(x)dx  
				\end{displaymath}
				\item[] If $m B_{T_m}^2 \stackrel{m \rightarrow \infty}{\longrightarrow} \infty$, then
				\begin{displaymath}
				\exists C > 0 \mathrm{\,s.t.}\,\,  \mathbb{P}_X\left( B_{T_m}^{-1}\left( \tilde{S}_{T_m,m}^{X^1}  -   \tilde{S}^{X^1}  \right) \geq C \right) \stackrel{m \rightarrow \infty}{\longrightarrow} 1  
				\end{displaymath}
				\end{itemize}
				where $\tilde g(x)$ is the probability density function of a random Gaussian vector of  zero mean and covariance $ \nabla \Phi^T(\mmu) \SSigma   \nabla \Phi(\mmu)$ (\ref{AsymptoVariance}).
				Therefore, in the probability space $(\tilde{\Omega}_Z \times \Omega_X, \sigma(\tilde{\mathcal{F}}_Z \times  \mathcal{F}_X), \tilde{\mathbb{P}}_Z \otimes  \mathbb{P}_X) $ we have 
				\begin{itemize}
				\item[] If $m B_{T_m}^2 \stackrel{m \rightarrow \infty}{\longrightarrow} 0$, then
				\begin{displaymath}
				\forall I \in  \R, \forall \delta >0, \,\, \tilde{\mathbb{P}}_Z \left( \left| \mathbb{P}_X\left( \sqrt{m}\left( \tilde{S}_{T_m,m}^{X^1}  -   \tilde{S}^{X^1}   \right) \in I \right) -  \int_I \tilde g(x)dx \right| > \delta \right) \stackrel{m \rightarrow \infty}{\longrightarrow} 0
				\end{displaymath}
				\item[] If $m B_{T_m}^2 \stackrel{m \rightarrow \infty}{\longrightarrow} \infty$, then
				\begin{displaymath}
				\forall \delta > 0, \exists C > 0 \mathrm{\,s.t.}\,\, \tilde{\mathbb{P}}_Z \left( \left|  \mathbb{P}_X\left( B_{T_m}^{-1}\left( \tilde{S}_{T_m,m}^{X^1}  -   \tilde{S}^{X^1}  \right) \geq C \right) - 1  \right| > \delta \right) \stackrel{m \rightarrow \infty}{\longrightarrow} 0
				\end{displaymath}
				\end{itemize}

				and the equalities  $(\tilde{f}_{T_m}(x), \tilde{a}_{T_m}(x), \tilde{b}_{T_m}(x)) \stackrel{\mathcal{L}}{=}  ({f}(x), {a}_{T_m}(x), {b}_{T_m}(x))$   and  $\tilde{f} (x) \stackrel{\mathcal{L}}{=} f(x)$ for all $m$ give  us   in the probability space $( {\Omega}_Z \times \Omega_X,  \sigma({\mathcal{F}}_Z \times  \mathcal{F}_X),  {\mathbb{P}}_Z \otimes  \mathbb{P}_X) $:
				\begin{itemize}
				\item[] If $m B_{T_m}^2 \stackrel{m \rightarrow \infty}{\longrightarrow} 0$, then
				\begin{displaymath}
				\forall I \in \Omega_X, \forall \delta > 0, \,\,  {\mathbb{P}}_Z \left( \left|  \mathbb{P}_X\left( \sqrt{m}\left(   {S}_{T_m,m}^{X^1}   -   {S}^{X^1}  \right) \in I \right) -  \int_I g(x)dx \right| > \delta \right)  \stackrel{m \rightarrow \infty}{\longrightarrow}  0
				\end{displaymath}
				\item[] If $m B_{T_m}^2 \stackrel{m \rightarrow \infty}{\longrightarrow} \infty$, then
				\begin{displaymath}
				\forall \delta >0, \exists C > 0 \mathrm{\,s.t.}\,\,  {\mathbb{P}}_Z \left( \left|  \mathbb{P}_X\left( B_{T_m}^{-1}\left(   {S}_{T_m,m}^{X^1}  -   {S}^{X^1}  \right) \geq C \right)  - 1   \right| > \delta \right) \stackrel{m \rightarrow \infty}{\longrightarrow} 0
				\end{displaymath}
				\end{itemize}		
				where $ g(x)$ is the probability density function of a random Gaussian vector of  zero mean and variance
				\begin{displaymath} 
				 \frac{\VX{\left( f(X) - \EX{f(X)}\right)\left( f(\tX) - \EX{f(X)} - S^{X^1} f(X) + S^{X^1} \EX{f(X)} \right)}}{\left( \VX{f(X)}\right)^2}
				\end{displaymath}
				This completes the proof.

\bibliographystyle{ieeetr}
\bibliography{biblio}

\end{document}